\def\<{\left\langle}
\def\>{\right\rangle}
\def\({\left(}
\def\){\right)}
\def\bea{\begin{eqnarray*}}
\def\eea{\end{eqnarray*}}
\newtheorem{main}{Theorem}
\newtheorem{thm}{Theorem}
\newtheorem{lem}{Lemma}
\newtheorem{prop}{Proposition}
\newtheorem{defn}{Definition}
\newtheorem{cor}{Corollary}
\newtheorem{rem}{Remark}
\newenvironment{proof}{\medskip \noindent
{\bf Proof.}}{\hfill \rule{.5em}{1em}
\\}
\begin{document}
\sloppy
\title{Harnack Estimates for Nonlinear Backward Heat Equations in Geometric Flows}

\author{Hongxin Guo and Masashi Ishida}

\date{}

\maketitle

\begin{abstract}
Let $M$ be a closed Riemannian manifold with a family of Riemannian metrics $g_{ij}(t)$ evolving by a geometric flow $\partial_{t}g_{ij} = -2{S}_{ij}$, where $S_{ij}(t)$ is a family of smooth symmetric two-tensors. We derive several differential Harnack estimates for positive solutions to the nonlinear backward heat-type equation
\begin{eqnarray*}
\frac{\partial f}{\partial t} = -{\Delta}f + \gamma f\log f +aSf
\end{eqnarray*}
where $a$ and $\gamma$ are constants and $S=g^{ij}S_{ij}$ is the trace of $S_{ij}$.
Our abstract formulation provides a unified framework for some known results proved by various authors,
and moreover lead to new Harnack inequalities for a variety of geometric flows
\footnote{2000 Mathematical Subject Classification: 53C44, 53C21}.
\end{abstract}

\section{Introduction}\label{intro}
The study of differential Harnack estimates for parabolic equations originated with the work of Li and Yau \cite{Li-Yau}. They proved a gradient estimate for the heat equation by using the maximal principle. By integrating the gradient estimate along a space-time path, a classical Harnack inequality was derived. Therefore, Li-Yau type gradient estimate is often called differential Harnack estimate.  Similar techniques were used by Hamilton to prove Harnack estimates for the Ricci flow \cite{h-surface, ha-1} and the mean curvature flow \cite{ha-2}.

Using similar techniques, many authors have proved a variety of Li-Yau-Hamilton's Harnack estimates for various equations
in different geometric flows, and we refer to the survey paper by Ni \cite{Ni-lei}.
In the Ricci flow, Perelman proved a Harnack inequality for the fundamental solution of the conjugate heat equation \cite{p-1}
and later on different Harnack inequalities have been proved,
to name but a few \cite{C, C-H, C-Z, K-Z, Liu-1, p-1, Wu, Z-1}. Harnack inequalities for more general flows have
been considered in \cite{S-F, G-F, G-H, Ishida-1, Sun}. \par

The main purpose of the current article is, in the framework of a general geometric flow,
to derive Li-Yau-Hamilton type differential Harnack estimates
for positive solutions to a nonlinear backward heat-type equation
generalizing Perelman's conjugate heat equation.

Let $M$ be a closed Riemannian $n$-manifold with a one parameter family of Riemannian metrics $g(t)$ evolving by the geometric flow
\begin{eqnarray}\label{GF}
\frac{\partial}{\partial t}g_{ij} = - 2S_{ij} \ , \  t \in [0, T)
\end{eqnarray}
where $S_{ij}(t)$ is any smooth symmetric two-tensor on $(M, g(t))$.
$f$ is a positive solution to
\begin{eqnarray}\label{backward-non}
\frac{\partial f}{\partial t} = -{\Delta}f + \gamma f\log f + aSf.
\end{eqnarray}
where symbol $\Delta$ stands for the Laplacian of the evolving metric $g(t)$, $\gamma$ and $a$ are constants and $S=g^{ij}S_{ij}$ is the trace of $S_{ij}$. In the Ricci flow case, when $\gamma=0$ and $a=1$, (\ref{backward-non}) is the conjugate heat equation introduced by Perelman. The consideration of this nonlinear equation is motivated by gradient Ricci solitons. See \cite{C-Z, Wu} for more details.
In a forthcoming paper \cite{G-I}, we will consider the forward nonlinear heat-type equation.

To state the main results, we introduce evolving tensor quantities associated to the tensor ${S}_{ij}$.
\begin{defn}\label{h-d-de}
Let $g(t)$ evolve by (\ref{GF}) and $X=X^{i}\frac{\partial}{\partial x^{i}}$ be a vector field on $M$.
For a constant $a \in {\mathbb R}$, we define
\begin{eqnarray}\label{E-definition}
{\mathcal E}_{a}(S_{ij}, {X})= \Big(a\frac{\partial S}{\partial \tau} + a\Delta S + 2 |S_{ij}|^{2} \Big) - 2 \Big(2{\nabla}^{i}S_{i\ell} - {\nabla}_{\ell}S \Big){X}^{\ell} - 2(R^{ij} -S^{ij}){X}_{i}{X}_{j},
\end{eqnarray}
where $\tau = T -t$, $R^{ij} = {g}^{ik}g^{j \ell}R_{k \ell}$, $S^{ij} = {g}^{ik}g^{j \ell}S_{k \ell}$, $S = g^{ij}S_{ij}$, ${\nabla}^{i} = g^{ij}{\nabla}_{j}$ and $X_{k} = {g}_{ik}X^{i}$.
\end{defn}

\begin{rem}\label{the remark for E}
The quantity of ${\mathcal E}_{a}(S_{ij}, {X})$ is a generalization of $\mathcal D (S_{ij}, X)$ defined by Reto M\"uller \cite{Mu-1}.
Indeed, $\mathcal D (S_{ij}, X)=-{\mathcal E}_{1}(S_{ij}, {X})$. ${\mathcal E}_{a}(S_{ij}, {X})$ has also been implicitly discussed
in recent papers of one of the authors and others \cite{G-H, G-P-T}.  ${\mathcal E}_{a}(S_{ij}, {X})$ appears as the error term
in our formulas under the flow (\ref{GF}), and when the flow is Hamilton's Ricci flow ${\mathcal E}_{1}(R_{ij}, {X})=0$.
\end{rem}

In the following theorems A-E, we assume $(M, g(t))$, $t \in [0, T)$, is a solution to the geometric flow (\ref{GF}) on a closed oriented smooth $n$-manifold $M$. Denote $\tau:=T-t$.

\begin{main}\label{main-A}
\begin{enumerate}
\item Suppose that
\begin{eqnarray}\label{key-cond-2}
{\mathcal E}_{2}(S_{ij},X) + \frac{2S^{2}}{n} \leq 0
\end{eqnarray}
holds for all vector fields $X$ and all time $t \in [0, T)$ for which the flow exists.
Let $f$ be a positive solution to the equation (\ref{backward-non}) with $\gamma=1$ and $a=2$,
\begin{eqnarray*}
\frac{\partial f}{\partial t} = -{\Delta}f + f\log f + 2Sf.
\end{eqnarray*}
Then, for all time $t \in [0, T)$
it holds
\begin{eqnarray*}
 2{\Delta}\log f +|\nabla \log f|^{2}-2{S} +2 \frac{n}{\tau} + \frac{n}{2}\geq 0.
\end{eqnarray*}
\item Suppose that
\begin{eqnarray}\label{key-cond-3}
{\mathcal E}_{1}(S_{ij},X) \leq 0, \ S \geq 0
\end{eqnarray}
hold for all vector fields $X$ and all time $t \in [0, T)$ for which the flow exists.
Let $f$ be a positive solution to the equation (\ref{backward-non}) with $\gamma=1$ and $a=1$, namely
\begin{eqnarray*}
\frac{\partial f}{\partial t} = -{\Delta}f + f\log f + Sf.
\end{eqnarray*}
Then, for all time $t \in [0, T)$ it holds
\begin{eqnarray*}
 2{\Delta}\log f + |\nabla \log f|^{2}-{S} +2 \frac{n}{\tau} + \frac{n}{4}\ge 0.
\end{eqnarray*}
\end{enumerate}
\end{main}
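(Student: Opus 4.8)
The plan is to recast the desired inequality as the nonnegativity of a single Harnack quantity and then to run a maximum-principle argument. First I would set $u := \log f$, which is smooth since $f > 0$ on the closed manifold $M$. Substituting $f = e^{u}$ into (\ref{backward-non}) and using $\Delta f = e^{u}(\Delta u + |\nabla u|^{2})$ together with $\frac{\partial f}{\partial t} = e^{u}\frac{\partial u}{\partial t}$, division by $e^{u}$ gives the linearized backward equation
\[
\frac{\partial u}{\partial t} = -\Delta u - |\nabla u|^{2} + \gamma u + aS,
\]
equivalently $\frac{\partial u}{\partial \tau} = \Delta u + |\nabla u|^{2} - \gamma u - aS$ in the variable $\tau = T - t$, which is forward-parabolic and hence amenable to the maximum principle.

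For both statements I would use the single quantity
\[
P = 2\Delta u + |\nabla u|^{2} - aS + \frac{2n}{\tau} + \frac{a\gamma n}{4},
\]
so that part (1) is the case $\gamma=1,a=2$ (where $\frac{a\gamma n}{4} = \frac{n}{2}$) and part (2) the case $\gamma=1,a=1$ (where $\frac{a\gamma n}{4} = \frac{n}{4}$); in each case the assertion is precisely $P \geq 0$. The technical core is the computation of the evolution of $P$ under the coupled system. Since the metric moves by (\ref{GF}) one has $\partial_{t}g^{ij} = 2S^{ij}$, so applying $\partial_{\tau}$ to $\Delta u$, $|\nabla u|^{2}$ and $S$ produces extra $S_{ij}$-contractions; combining these with the Bochner formula $\Delta|\nabla u|^{2} = 2|\nabla^{2}u|^{2} + 2\nabla u\cdot\nabla\Delta u + 2R_{ij}\nabla^{i}u\nabla^{j}u$, the commutation identity $\nabla_{i}\Delta u = \Delta\nabla_{i}u - R_{ij}\nabla^{j}u$, and the Bianchi-type expression $2\nabla^{i}S_{i\ell} - \nabla_{\ell}S$, I expect the drift-diffusion operator $\partial_{\tau} - \Delta + 2\nabla u\cdot\nabla$ applied to $P$ to yield a \emph{good} Hessian square $2|\nabla^{2}u|^{2}$ together with exactly the combination of Definition \ref{h-d-de}, namely the error term $\mathcal{E}_{a}(S_{ij}, \nabla u)$ evaluated at the natural field $X = \nabla u$. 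The $\gamma u$ term contributes only linear-in-$P$ pieces, which is why the constant $\frac{a\gamma n}{4}$ enters.

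I would then bound the Hessian square by Cauchy--Schwarz, completing a square of the trace part that also carries the $\tau^{-1}$ and $S$ contributions, which converts the \emph{good} term into a self-improving reaction term $\geq \frac{c}{n}P^{2} - \frac{c'}{\tau}P$ for positive constants $c,c'$. At this point the sign hypotheses enter. In part (1) the remainder from the square-completion is an $\frac{2S^{2}}{n}$ term, so condition (\ref{key-cond-2}), $\mathcal{E}_{2}(S_{ij},X) + \frac{2S^{2}}{n}\leq 0$, is exactly what discards the error; in part (2) the analogous remainder is linear in $S$, whose sign is controlled by $S \geq 0$ combined with $\mathcal{E}_{1}(S_{ij},X)\leq 0$, i.e.\ (\ref{key-cond-3}). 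Thus under the stated hypotheses one obtains schematically
\[
\Big(\frac{\partial}{\partial\tau} - \Delta + 2\nabla u\cdot\nabla\Big)P \geq \frac{c}{n}P^{2} - \frac{c'}{\tau}P .
\]

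Finally I would apply the maximum principle. Because $M$ is closed, the $\frac{2n}{\tau}$ term forces $P \to +\infty$ as $\tau \to 0^{+}$, so $P > 0$ near $t = T$; the above differential inequality then prevents $P$ from reaching $0$ at any later $\tau$, giving $P \geq 0$ throughout $t \in [0,T)$, which is the claim. I expect the main obstacle to be the evolution computation itself: organizing the many metric-variation, Bochner and commutator terms so that they collapse precisely into $\mathcal{E}_{a}(S_{ij},\nabla u)$, and fixing the constants $\frac{2n}{\tau}$ and $\frac{a\gamma n}{4}$ so that the square-completion leaves exactly the $\frac{2S^{2}}{n}$ (respectively $S$-linear) remainder matched by (\ref{key-cond-2}) and (\ref{key-cond-3}). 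A secondary point is justifying the maximum principle near the blow-up $\tau \to 0^{+}$, which I would handle by an ODE-comparison/barrier argument on $[\varepsilon, \tau]$ followed by letting $\varepsilon \to 0^{+}$.
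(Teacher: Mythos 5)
Your proposal is correct and follows essentially the same route as the paper: the authors work with $v=-\log f$ (so their $P_{S}=2\Delta v-|\nabla v|^{2}+aS+dn/\tau$ is the negative of your $P$ up to the additive constant), derive the evolution inequality with error term ${\mathcal E}_{a}(S_{ij},\nabla v)$ via Corollary \ref{PS-cor}, complete the same squares so that the leftover is exactly $\tfrac{2}{n}\gamma S^{2}$ in case (1) and an $S/\tau$ term in case (2), and close with the maximum principle anchored at $\tau\to 0^{+}$. The only cosmetic difference is that the paper's reaction term is linear in $P_{S}$ (the Hessian squares are simply discarded for $0\le\gamma\le 1$) rather than the quadratic self-improving term you sketch, but this does not affect the argument.
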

In the second case of Theorem \ref{main-A}, dropping the assumption $S\ge 0$ we are able to prove:
\begin{main}\label{main-Aa}
Let $f$ be a positive solution to
\begin{eqnarray*}
\frac{\partial f}{\partial t} = -{\Delta}f + f\log f + Sf.
\end{eqnarray*}
Suppose that
\begin{eqnarray}\label{s-t-bound}
{\mathcal E}_{1}(S_{ij},X) \leq 0, \ S \geq -\frac{n}{2t}
\end{eqnarray}
hold for all vector fields $X$ and all time $t \in [\frac{T}{2}, T)$ for which the flow exists.
Then for all time $t \in [\frac{T}{2}, T)$, it holds
\begin{eqnarray*}
2{\Delta}\log f + |\nabla \log f|^{2}-{S} +3 \frac{n}{\tau} + \frac{n}{4}\ge 0.
\end{eqnarray*}
\end{main}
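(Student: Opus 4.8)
The plan is to reduce Theorem \ref{main-Aa} to the maximum-principle computation behind Theorem \ref{main-A}(2), run with a Harnack quantity carrying one extra unit of $n/\tau$, and then to locate precisely where the sign hypothesis on $S$ enters. First I set $v = \log f$. Since $\Delta f / f = \Delta v + |\nabla v|^2$, equation (\ref{backward-non}) with $\gamma=1$, $a=1$ becomes
\[
\partial_\tau v = \Delta v + |\nabla v|^2 - v - S ,
\]
a forward heat-type equation in $\tau = T-t$. I then introduce
\[
P = 2\Delta v + |\nabla v|^2 - S + \frac{3n}{\tau} + \frac{n}{4},
\]
which differs from the quantity used for Theorem \ref{main-A}(2) only in the coefficient $3$ (rather than $2$) of $n/\tau$, and aim to prove $P \ge 0$ on $M \times [\frac{T}{2}, T)$ by the minimum principle for the drift operator $L := \partial_\tau - \Delta - 2\nabla v \cdot \nabla$.

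Next I compute $LP$. The ingredients are the commutator $\partial_\tau \Delta v = -2S^{ij}\nabla_i\nabla_j v + \Delta \partial_\tau v - (2\nabla^j S_{j\ell} - \nabla_\ell S)\nabla^\ell v$ (coming from $\partial_\tau g^{ij} = -2S^{ij}$ and the evolution of the Christoffel symbols), the Bochner formula $\Delta|\nabla v|^2 = 2|\nabla^2 v|^2 + 2\nabla v \cdot \nabla\Delta v + 2R_{ij}\nabla^i v\nabla^j v$, and the equation for $\partial_\tau v$ above. The operator $L$ is arranged so that the fourth-order terms and the terms $\nabla v \cdot \nabla\Delta v$ cancel, leaving a reaction. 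Its second-order part leaves $+2|\nabla^2 v|^2$, and the terms built from $S$, $\nabla S$, $\partial_\tau S$ and $R_{ij}\nabla^i v\nabla^j v$ reorganize --- this is the purpose of Definition \ref{h-d-de} --- into $-\mathcal{E}_1(S_{ij},\nabla v)$, which is $\ge 0$ by (\ref{s-t-bound}). Using $|\nabla^2 v|^2 \ge (\Delta v)^2/n$, I expect a differential inequality of the form
\[
LP \ge \frac{2}{n}(\Delta v)^2 - \frac{3n}{\tau^2} + \mbox{(lower-order reaction)},
\]
where the lower-order reaction collects the contribution of the $\gamma=1$ nonlinearity (the $+v$ term), the quadratic $|\nabla v|^2$ terms, and the constant $n/4$.

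The decisive step, and the main obstacle, is the leading-order balance as $\tau \to 0$. At an interior spatial minimum where $P=0$ one has $\nabla P = 0$, $\Delta P \ge 0$, and $2\Delta v = S - |\nabla v|^2 - \frac{3n}{\tau} - \frac{n}{4}$; substituting into $\frac{2}{n}(\Delta v)^2 = \frac{1}{2n}(S - |\nabla v|^2 - \frac{3n}{\tau} - \frac{n}{4})^2$ produces the leading term $\frac{1}{2n}(\frac{3n}{\tau})^2 = \frac{9n}{2\tau^2}$, a surplus $\frac{9n}{2\tau^2} - \frac{3n}{\tau^2} = \frac{3n}{2\tau^2}$ over the $-3n/\tau^2$ coming from $\partial_\tau(3n/\tau)$, and (after combining the expansion of the square with the reaction) a cross term of order $\tau^{-1}$ that is linear in $S$, with the favorable sign when $S \ge 0$. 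This is exactly where the hypothesis relaxes $S\ge 0$ to $S \ge -\frac{n}{2t}$: on $[\frac{T}{2},T)$ one has $\tau = T - t \le \frac{T}{2}\le t$, hence $\frac{1}{t}\le\frac{1}{\tau}$, so the unfavorable part of that cross term is bounded below by $-\frac{3n}{2\tau^2}$, which the surplus $\frac{3n}{2\tau^2}$ absorbs exactly to zero. For comparison, Theorem \ref{main-A}(2) uses the coefficient $2$, for which the analogous surplus $\frac{c(c-2)n}{2\tau^2}$ vanishes and no error is admissible, forcing $S\ge 0$. Confirming that the two constants --- the surplus manufactured by enlarging the coefficient to $3$ and the size of the error manufactured by the curvature bound --- align precisely, and that the residual terms in $|\nabla v|^2$ and the constant $n/4$ are genuinely nonnegative after completing squares, is the part that demands care rather than mere bookkeeping.

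Finally the minimum principle is applied on $M\times[\frac{T}{2},T)$. Since $f$ is smooth and positive on the closed manifold $M$, the quantity $2\Delta v + |\nabla v|^2 - S$ is bounded on each time slice while $3n/\tau \to +\infty$ as $t\to T$, so $P > 0$ near $t=T$; the inequality $LP \ge 0$ at any first zero of $\min_M P$ then prevents $\min_M P$ from crossing $0$ as $\tau$ increases, giving $P\ge 0$ throughout $M\times[\frac{T}{2},T)$, which is the assertion. The only point beyond the constant-matching is the standard treatment of the singularity at $\tau=0$ and of the drift term, carried out by running the argument on $[\frac{T}{2}, T-\varepsilon]$ and letting $\varepsilon\to 0$.
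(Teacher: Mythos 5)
Your global strategy---apply the maximum principle to $P=2\Delta\log f+|\nabla\log f|^{2}-S+\frac{3n}{\tau}+\frac{n}{4}$ (up to sign this is the paper's $P_{S}-\frac{n}{4}$ with $\alpha=2$, $\beta=1$, $a=1$, $d=-3$), observe that $P>0$ near $t=T$ because of the $3n/\tau$ term, and convert $S\ge-\frac{n}{2t}$ into $S\ge-\frac{n}{2\tau}$ via $\tau=T-t\le\frac{T}{2}\le t$---is exactly the paper's. But the step you single out as decisive does not close as you describe it. You propose to evaluate $\frac{2}{n}(\Delta v)^{2}=\frac{1}{2n}\big(S-|\nabla v|^{2}-\frac{3n}{\tau}-\frac{n}{4}\big)^{2}$ at a first zero of $\min_{M}P$, harvest the surplus $\frac{9n}{2\tau^{2}}-\frac{3n}{\tau^{2}}=\frac{3n}{2\tau^{2}}$, and absorb the $S$-linear cross term with it. That cross term is $\frac{1}{2n}\cdot 2S\cdot\big(-\frac{3n}{\tau}\big)=-\frac{3S}{\tau}$, which is unfavorable when $S$ is large and \emph{positive}; hypothesis (\ref{s-t-bound}) only bounds $S$ from below, so this term has no lower bound and the surplus cannot absorb it. (Adding the genuine zeroth-order reaction linear in $S$, which for $a=1$ contributes $+\frac{2S}{\tau}$ to the lower bound for $LP$, still leaves $-\frac{S}{\tau}$, with the same defect.) The sign would only come out right if $S$ were kept inside the completed square, i.e.\ if one used $\big|\nabla_{i}\nabla_{j}v+S_{ij}-\frac{1}{\tau}g_{ij}\big|^{2}\ge\frac{1}{n}\big(\Delta v+S-\frac{n}{\tau}\big)^{2}$ as in Corollary \ref{PS-cor}, and then the whole expansion (including the residual cross terms $S|\nabla v|^{2}$ and $S\cdot\frac{n}{4}$) would have to be redone and checked.

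The paper's mechanism is different and simpler: the completed-square term is never evaluated at the minimum---it is discarded outright, its coefficient being nonpositive for $0\le\gamma\le1$---and the hypothesis on $S$ enters only through the zeroth-order reaction. Taking $a=1$ instead of $2$ produces the reaction term $2S\cdot\frac{a-2}{\tau}=-\frac{2S}{\tau}$, and taking $d=-3$ instead of $-2$ produces the net term $\frac{n}{\tau^{2}}(d+2)=-\frac{n}{\tau^{2}}$; together these form $-\frac{2}{\tau}\big(S+\frac{n}{2\tau}\big)\le 0$ precisely under $S\ge-\frac{n}{2\tau}$. After also dropping $\frac{2n\gamma(d+1)}{\tau}=-\frac{4n}{\tau}<0$ and shifting by $-\frac{n}{4}$, one is left with $\partial_{\tau}\big(P_{S}-\frac{n}{4}\big)\le\Delta\big(P_{S}-\frac{n}{4}\big)-2\nabla^{i}\big(P_{S}-\frac{n}{4}\big)\nabla_{i}v-\big(\frac{2}{\tau}+2\big)\big(P_{S}-\frac{n}{4}\big)$, to which the maximum principle applies directly. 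You should either reproduce this bookkeeping or, if you insist on a pointwise balance at the spatial minimum, carry the $S_{ij}$ inside the Hessian square and verify every residual term; as written, your accounting of where $S\ge-\frac{n}{2t}$ enters is incorrect.
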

We are able to derive classical Harnack inequalities by integrating the above differential Harnack inequalities along space-time paths. For instance, the first case of Theorem \ref{main-A} implies following. The proof is now standard (for example, see \cite{C, Wu}).
\begin{cor}
Suppose that (\ref{key-cond-2}) holds for all vector fields $X$ and all time $t \in [0, T)$ for which the flow exists. Let $f$ be a positive solution to the heat equation
\begin{eqnarray*}
\frac{\partial f}{\partial t} = -{\Delta}f + f\log f + 2Sf.
\end{eqnarray*}
Assume that $(x_{1}, t_{1})$ and $(x_{2}, t_{2})$ are two points in $M \times (0, T)$, where $0 < t_{1} < {t}_{2} < T$. Then the following holds:
\begin{eqnarray*}
 e^{t_{2}} \log f(x_{2}, t_{2})-e^{t_{1}} \log f(x_{1}, t_{1}) \leq \frac{1}{2} {\int}^{t_{2}}_{t_{1}}
e^{\tau} \Big(|\dot{\ell}|^{2} +2S + \frac{n}{2} +2\frac{n}{\tau} \Big)dt,
\end{eqnarray*}
where $\ell$ is any space-time path joining $(x_{1}, t_{1})$ and $(x_{2}, t_{2})$.
\end{cor}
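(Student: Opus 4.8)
The plan is to integrate the differential Harnack inequality of the first case of Theorem~\ref{main-A} along the space-time path $\ell$, following the standard Li--Yau--Perelman scheme. First I would pass to the potential $u := \log f$. Writing $f = e^{u}$ and using $\Delta f / f = \Delta u + |\nabla u|^{2}$, the equation $\partial_{t} f = -\Delta f + f\log f + 2Sf$ becomes
\[
\partial_{t} u = -\Delta u - |\nabla u|^{2} + u + 2S .
\]
Next, parametrize $\ell$ by $t \in [t_{1}, t_{2}]$ with $\ell(t_{1}) = x_{1}$ and $\ell(t_{2}) = x_{2}$, and set $\phi(t) := u(\ell(t), t)$. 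By the chain rule $\phi'(t) = \partial_{t} u + \langle \nabla u, \dot{\ell}\rangle$, where $\langle \cdot, \cdot\rangle$ and $|\dot{\ell}|^{2}$ are taken with respect to the evolving metric $g(t)$.

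Then I would feed in the differential Harnack estimate. The first case of Theorem~\ref{main-A} gives $2\Delta u + |\nabla u|^{2} - 2S + 2\frac{n}{\tau} + \frac{n}{2} \ge 0$, that is, $-\Delta u \le \frac{1}{2}|\nabla u|^{2} - S + \frac{n}{\tau} + \frac{n}{4}$. Substituting this bound for $-\Delta u$ into the expression for $\phi'$ and completing the square through $-\frac{1}{2}|\nabla u|^{2} + \langle \nabla u, \dot{\ell}\rangle = -\frac{1}{2}|\nabla u - \dot{\ell}|^{2} + \frac{1}{2}|\dot{\ell}|^{2} \le \frac{1}{2}|\dot{\ell}|^{2}$, every gradient term is absorbed and one is left with the scalar first-order inequality
\[
\phi'(t) - \phi(t) \le \frac{1}{2}|\dot{\ell}|^{2} + S + \frac{n}{\tau} + \frac{n}{4} .
\]

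Finally I would remove the linear term $\phi$ by an integrating factor. Since $\tau = T - t$ gives $\frac{d}{dt} e^{\tau} = -e^{\tau}$, we have $\frac{d}{dt}\big(e^{\tau}\phi\big) = e^{\tau}(\phi' - \phi)$, so multiplying the last inequality by $e^{\tau}$ and integrating over $[t_{1}, t_{2}]$ yields
\[
e^{\tau_{2}}\log f(x_{2}, t_{2}) - e^{\tau_{1}}\log f(x_{1}, t_{1}) \le \frac{1}{2}\int_{t_{1}}^{t_{2}} e^{\tau}\Big(|\dot{\ell}|^{2} + 2S + \frac{n}{2} + 2\frac{n}{\tau}\Big)\,dt,
\]
with $\tau_{i} = T - t_{i}$, which is the claimed inequality.

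The only non-mechanical point, and the step I expect to be the main obstacle, is the treatment of the nonlinear term $f\log f$: it produces the undifferentiated summand $+\phi$ on the right-hand side, which is precisely what forces the exponential weight $e^{\tau}$ (absent in the linear conjugate-heat case $\gamma = 0$) and fixes its form via the identity $\frac{d}{dt}(e^{\tau}\phi) = e^{\tau}(\phi' - \phi)$. Everything else is the routine completion of square and integration; the time dependence of $g(t)$ enters only through the metric used in $\langle \nabla u, \dot{\ell}\rangle$ and $|\dot{\ell}|^{2}$ and causes no difficulty, since nothing beyond the scalar $\phi$ is differentiated in $t$.
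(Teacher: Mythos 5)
Your argument is correct and is exactly the standard Li--Yau integration scheme that the paper invokes without detail (it only cites \cite{C, Wu}): with $u=\log f$ one has $\partial_t u=-\Delta u-|\nabla u|^2+u+2S$, the first case of Theorem~\ref{main-A} bounds $-\Delta u$, completing the square against $\langle\nabla u,\dot{\ell}\rangle$ absorbs the gradient terms, and the $+u$ produced by $f\log f$ is removed by the integrating factor $e^{-t}=e^{\tau-T}$, exactly as you say. One caveat: your computation yields $e^{\tau_2}\log f(x_2,t_2)-e^{\tau_1}\log f(x_1,t_1)$ on the left with $\tau_i=T-t_i$, which is the version consistent with the weight $e^{\tau}$ appearing in the integrand; the corollary as printed carries $e^{t_i}$ on the left, which is incompatible with that integrating factor and appears to be a typo, so you should not assert that your final display ``is the claimed inequality'' verbatim --- it is the corrected, internally consistent form of it.
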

We notice that the second case of Theorem \ref{main-A} and Theorem \ref{main-Aa} also imply
similar classical Harnack inequalities. We leave details to the interested readers.\par

On the other hand, for the equation (\ref{backward-non}) in the case where $a=0$, we shall prove the following result:
\begin{main}\label{main-B}
Suppose $\gamma > 0$ and $0<f<1$ is a positive solution to
\begin{eqnarray*}
\frac{\partial f}{\partial t} = -{\Delta}_{}f +\gamma f\log f.
\end{eqnarray*}
If
\begin{eqnarray}\label{R-S-A}
R_{ij}(t) + S_{ij}(t) \geq -\frac{1}{2}\gamma g_{ij}(t),
\end{eqnarray}
then for all time $t \in [0, T)$ it holds:
\begin{eqnarray*}
|\nabla \log f|^{2} + \frac{\log f}{\tau} \leq 0.
\end{eqnarray*}
\end{main}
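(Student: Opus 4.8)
The plan is to run a maximum-principle argument on the logarithm $u := \log f$. Since $0 < f < 1$ we have $u < 0$ throughout, a fact that will supply the sign needed at the initial time. First I would record the evolution of $u$: dividing the equation for $f$ by $f$ and using $\Delta f / f = \Delta u + |\nabla u|^2$ gives, in the variable $\tau = T - t$,
\[
\frac{\partial u}{\partial \tau} = \Delta u + |\nabla u|^2 - \gamma u ,
\]
which is a genuine forward heat-type equation in $\tau$.

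Next I would introduce the Harnack quantity $P := \tau |\nabla u|^2 + u$, so that the desired inequality $|\nabla\log f|^2 + \tau^{-1}\log f \le 0$ is exactly $P \le 0$ (recall $\tau > 0$). The core computation is to apply the operator $\frac{\partial}{\partial\tau} - \Delta$ to $P$. Here I would differentiate $|\nabla u|^2 = g^{ij}\nabla_i u \nabla_j u$ in $\tau$, picking up the term $-2S^{ij}\nabla_i u \nabla_j u$ from $\frac{\partial}{\partial\tau} g^{ij} = -2S^{ij}$, commute derivatives through $\frac{\partial u}{\partial\tau}$, and eliminate the third-order term via the Bochner formula $\Delta|\nabla u|^2 = 2|\nabla^2 u|^2 + 2\langle \nabla u, \nabla\Delta u\rangle + 2R_{ij}\nabla^i u \nabla^j u$. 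Writing $\nabla|\nabla u|^2 = \tau^{-1}(\nabla P - \nabla u)$ converts the remaining first-order terms into a transport term $2\langle\nabla u, \nabla P\rangle$ that annihilates at a spatial maximum of $P$.

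The decisive step is the algebraic cancellation. After the above manipulations the error terms are $-2\tau(R_{ij}+S_{ij})\nabla^i u \nabla^j u$ and $-2\gamma\tau|\nabla u|^2$ together with $-\gamma u$; invoking the hypothesis $R_{ij} + S_{ij} \ge -\frac{1}{2}\gamma g_{ij}$ bounds $-2\tau(R_{ij}+S_{ij})\nabla^i u\nabla^j u \le \gamma\tau|\nabla u|^2$, and the three contributions collapse precisely to $-\gamma(\tau|\nabla u|^2 + u) = -\gamma P$. Thus I expect to obtain
\[
\Big(\frac{\partial}{\partial \tau} - \Delta\Big) P \le -\gamma P + 2\langle \nabla u, \nabla P\rangle - 2\tau|\nabla^2 u|^2 \le -\gamma P + 2\langle \nabla u, \nabla P\rangle .
\]
The precise matching of the coefficient $\frac{1}{2}\gamma$ in the curvature hypothesis with the $\gamma$ coming from the reaction term is the heart of the argument, and getting every sign right through the $t \leftrightarrow \tau$ change and the metric's time dependence is the step I would treat most carefully.

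Finally I would close with the maximum principle. As $\tau \to 0^{+}$ one has $P \to u = \log f < 0$, so $P$ is strictly negative at the initial time; since $P$ satisfies the parabolic inequality above with the friendly zeroth-order term $-\gamma P$ (here $\gamma > 0$), setting $Q := e^{\gamma\tau}P$ removes that term and yields $(\frac{\partial}{\partial\tau} - \Delta)Q \le 2\langle\nabla u, \nabla Q\rangle$. On the closed manifold $M$ the spatial maximum of $Q$ is then non-increasing in $\tau$, so $\max_M Q(\cdot,\tau) \le \max_M Q(\cdot,0) < 0$, giving $P < 0$ and in particular $P \le 0$ for all $\tau \in (0,T]$, which is the assertion. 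The main obstacle I anticipate, beyond the bookkeeping in the cancellation, is justifying the initial behaviour $\tau|\nabla u|^2 \to 0$ as $\tau \to 0^{+}$ so that the initial value of $P$ is controlled by $\log f$; this is where the hypothesis $0 < f < 1$ enters decisively.
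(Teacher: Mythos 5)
Your proposal is correct and follows essentially the same route as the paper: the authors work with $H_{S}=|\nabla u|^{2}-u/\tau$ for $u=-\log f$ (your $P=\tau H_{S}$ up to the sign convention on $u$), derive the same evolution inequality in which the curvature hypothesis $R_{ij}+S_{ij}\ge -\tfrac12\gamma g_{ij}$ absorbs the $-2(R^{ij}+S^{ij})\nabla_i u\nabla_j u$ term against $-2\gamma|\nabla u|^{2}$ and $\gamma u/\tau$ to produce the friendly zeroth-order term, drop $-2|\nabla\nabla u|^{2}$, and conclude by the maximum principle using $H_{S}<0$ for small $\tau$ (which, as in your write-up, is asserted rather than justified in detail). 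The only cosmetic difference is your multiplication by $\tau$, which trades their coefficient $-(1/\tau+\gamma)$ for $-\gamma$.
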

For the equation (\ref{backward-non}) in the case where $a=0$ and $\gamma =0$, we prove
\begin{main}\label{main-C}
Suppose that 
\begin{eqnarray}\label{assumption in main-C}
R_{ij}(t) + S_{ij}(t) \geq 0.
\end{eqnarray}
Let $0<f<1$ be a positive solution to
the time-dependant heat equation
\begin{eqnarray*}
\frac{\partial f}{\partial t} = -{\Delta}_{}f.
\end{eqnarray*}
Then for all time $t \in [0, T)$, the following holds:
\begin{eqnarray*}
|\nabla \log f|^{2} + \frac{\log f}{\tau} \leq 0.
\end{eqnarray*}
\end{main}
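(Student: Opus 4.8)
The plan is to linearize the equation, form the Li--Yau--Hamilton quantity
\[
F := \tau\,|\nabla \log f|^{2} + \log f ,
\]
and to show $F\le 0$ by the parabolic maximum principle on the closed manifold $M$; since $\tau>0$ for $t\in[0,T)$, the asserted inequality $|\nabla\log f|^{2}+\frac{\log f}{\tau}\le 0$ is exactly $F\le 0$. This runs parallel to Theorem \ref{main-B} with $\gamma=0$. First I would set $u:=\log f$. Using $\Delta f = f(\Delta u+|\nabla u|^{2})$, the equation $\partial_t f=-\Delta f$ becomes the Hamilton--Jacobi type equation
\[
\partial_t u = -\Delta u - |\nabla u|^{2}.
\]
It is convenient to introduce the drift operator $L\,w := \partial_t w + \Delta w + 2\langle\nabla u,\nabla w\rangle$, for which one checks at once that $Lu = |\nabla u|^{2}$.

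The core computation is the evolution of $|\nabla u|^{2}$ under $L$. Here I must differentiate $|\nabla u|^{2}=g^{ij}\partial_i u\,\partial_j u$ in time while remembering that the metric moves: from (\ref{GF}) one has $\partial_t g^{ij}=2S^{ij}$, which produces the term $2S_{ij}\nabla^{i}u\,\nabla^{j}u$. Combining this with the Bochner formula $\Delta|\nabla u|^{2}=2|\nabla^{2}u|^{2}+2\langle\nabla u,\nabla\Delta u\rangle+2R_{ij}\nabla^{i}u\,\nabla^{j}u$, the two $\langle\nabla u,\nabla\Delta u\rangle$ contributions cancel and the drift term absorbs $\langle\nabla u,\nabla|\nabla u|^{2}\rangle$, leaving
\[
L\,|\nabla u|^{2}= 2|\nabla^{2}u|^{2}+2(R_{ij}+S_{ij})\nabla^{i}u\,\nabla^{j}u .
\]
Since $\partial_t\tau=-1$, I then obtain $L F = L(\tau|\nabla u|^{2})+Lu = -|\nabla u|^{2}+\tau L|\nabla u|^{2}+|\nabla u|^{2}$, that is,
\[
L F = 2\tau|\nabla^{2}u|^{2}+2\tau(R_{ij}+S_{ij})\nabla^{i}u\,\nabla^{j}u \ \ge\ 0,
\]
where the sign uses $\tau\ge 0$ together with the hypothesis (\ref{assumption in main-C}).

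Finally I would run the maximum principle. Because $M$ is closed, for each $t$ the function $F(\cdot,t)$ attains its spatial maximum at some $x_{0}(t)$, where $\nabla F=0$ and $\Delta F\le 0$. At such a point $L F=\partial_t F+\Delta F\ge 0$, hence $\partial_t F\ge -\Delta F\ge 0$; by Hamilton's trick $F_{\max}(t)$ is non-decreasing in $t$, equivalently non-increasing in $\tau$. Since the backward heat equation is naturally posed from the terminal time, I read off the data at $\tau\to 0^{+}$, where $F=\tau|\nabla u|^{2}+u\to u=\log f$, and $0<f<1$ forces $\log f<0$. Thus $F_{\max}(t)\le \limsup_{\tau\to 0^{+}}F_{\max}\le 0$ for every $t\in[0,T)$, which is the claim.

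The step I expect to be the main obstacle is precisely this terminal analysis as $\tau\to 0^{+}$: one must ensure $\tau|\nabla u|^{2}\to 0$ (i.e. that $|\nabla\log f|$ does not blow up faster than $\tau^{-1/2}$ as $t\to T$) so that $\limsup_{\tau\to 0^{+}}F_{\max}\le \max_{x}\log f(\cdot,T)\le 0$; this is delicate when $T$ is a singular time of the flow, and is the place where smoothness up to the terminal time (or a limiting argument on $[0,t_{0}]$ with $t_{0}\to T$) is genuinely used. The upper bound $f<1$ is essential, as it is exactly what makes the terminal value $\log f$ non-positive; by contrast the curvature positivity (\ref{assumption in main-C}) enters only through the single sign $LF\ge 0$, and the extra nonnegative term $2\tau|\nabla^{2}u|^{2}$ is not needed for this particular estimate.
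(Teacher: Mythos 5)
Your proposal is correct and follows essentially the same route as the paper: the paper sets $u=-\log f$, forms $H_{S}=|\nabla u|^{2}-\frac{u}{\tau}$ via Proposition \ref{lem-key-1} (with $\alpha=0$, $\beta=-1$, $a=c=\gamma=0$, $b=-1$, $d=0$), discards the good term $-2|\nabla\nabla u|^{2}$, uses $R_{ij}+S_{ij}\ge 0$ to kill the curvature term, and applies the maximum principle starting from $H_{S}<0$ at small $\tau$. Your $F=\tau|\nabla\log f|^{2}+\log f$ is exactly $\tau H_{S}$, so the only differences are the sign convention for $u$ and the harmless multiplication by $\tau$ that absorbs the $-\frac{1}{\tau}H_{S}$ term; your remark about the behaviour as $\tau\to 0^{+}$ correctly identifies the point the paper passes over with ``if $\tau$ is small, then $H_{S}<0$.''
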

Moreover, we also prove
\begin{main}\label{main-E}
Let $f$ be a positive solution to the conjugate heat equation
\begin{eqnarray*}
\frac{\partial f}{\partial t} = -{\Delta}f + Sf.
\end{eqnarray*}
Suppose that
\begin{eqnarray}\label{monotone-con}
{\mathcal E}_{1}(S_{ij},X) \leq 0
\end{eqnarray}
holds for all vector fields $X$ and all time $t \in [0, T)$ for which the flow exists.
Then
$$
\displaystyle \min_{M} \Big( 2 \Delta \log f + |\nabla \log f|^{2} - {S} \Big)
$$
increases along the geometric flow (\ref{GF}).
\end{main}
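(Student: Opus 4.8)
The plan is to reduce the monotonicity to a single pointwise evolution identity for the quantity
\[
H := 2\Delta \log f + |\nabla \log f|^{2} - S
\]
and then to apply the maximum principle on the closed manifold $M$. Setting $w=\log f$, which is smooth because $f>0$, and dividing the equation $\frac{\partial f}{\partial t}=-\Delta f+Sf$ by $f$, the identity $\Delta f = f(\Delta w+|\nabla w|^{2})$ yields
\[
\frac{\partial w}{\partial t}=-\Delta w-|\nabla w|^{2}+S .
\]
Thus $H=2\Delta w+|\nabla w|^{2}-S$, and everything hinges on computing $\partial H/\partial t$.

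The heart of the argument is this evolution computation, which I would carry out by differentiating the three pieces of $H$ in time while remembering that the metric itself evolves. Three ingredients enter: (i) $\partial_{t}g^{ij}=2S^{ij}$; (ii) the variation of the connection, $\partial_{t}\Gamma^{k}_{ij}=-g^{k\ell}(\nabla_{i}S_{j\ell}+\nabla_{j}S_{i\ell}-\nabla_{\ell}S_{ij})$, whose trace $g^{ij}\partial_{t}\Gamma^{k}_{ij}=-(2\nabla_{i}S^{ik}-\nabla^{k}S)$ supplies the commutator $[\partial_{t},\Delta]$; and (iii) the Bochner formula $\Delta|\nabla w|^{2}=2|\nabla_{i}\nabla_{j}w|^{2}+2\langle\nabla w,\nabla\Delta w\rangle+2R_{ij}\nabla^{i}w\nabla^{j}w$. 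After substituting the evolution of $w$ and cancelling the third-order terms $\Delta^{2}w$ and $\langle\nabla w,\nabla\Delta w\rangle$ against the operator $-\Delta-2\langle\nabla w,\nabla(\cdot)\rangle$, I expect to reach
\[
\frac{\partial H}{\partial t}=-\Delta H-2\langle\nabla \log f,\nabla H\rangle+{\mathcal E}_{1}(S_{ij},-\nabla \log f)-2\,|\nabla_{i}\nabla_{j}\log f-S_{ij}|^{2}.
\]
The decisive algebraic step is to recognise that the first-order divergence terms $(2\nabla^{i}S_{i\ell}-\nabla_{\ell}S)$ coming from (ii) and the quadratic curvature terms $(R^{ij}-S^{ij})$ coming from the Bochner formula and from $\partial_{t}g^{ij}$ assemble, using $\partial_{\tau}S=-\partial_{t}S$, exactly into ${\mathcal E}_{1}(S_{ij},X)$ of (\ref{E-definition}) evaluated at the vector field $X=-\nabla \log f$, while the leftover Hessian terms $-2|\nabla_{i}\nabla_{j}w|^{2}+4S^{ij}\nabla_{i}\nabla_{j}w-2|S_{ij}|^{2}$ complete the square displayed above.

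Granting this identity, the conclusion is immediate. By hypothesis (\ref{monotone-con}) we have ${\mathcal E}_{1}(S_{ij},X)\le 0$ for every vector field $X$, in particular for $X=-\nabla \log f$, and the Hessian square is nonnegative; hence $\frac{\partial H}{\partial t}\le -\Delta H-2\langle\nabla \log f,\nabla H\rangle$, that is, in the backward time $\tau=T-t$,
\[
\frac{\partial H}{\partial \tau}\ge \Delta H+2\langle\nabla \log f,\nabla H\rangle .
\]
Since $M$ is closed and $H(\cdot,t)$ is smooth, its spatial minimum is attained; at a minimising point $\nabla H=0$ and $\Delta H\ge 0$, so the inequality forces $\frac{d}{d\tau}\min_{M}H\ge 0$. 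Therefore $\min_{M}H$ is monotone non-decreasing along the flow, as claimed; note that when $S_{ij}=R_{ij}$ one has ${\mathcal E}_{1}(R_{ij},X)=0$, so this recovers the corresponding Perelman-type monotonicity for the conjugate heat equation.

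The main obstacle is confined to the evolution computation: organising the commutator $[\partial_{t},\Delta]$ and the metric-derivative contributions so that all third-order terms cancel, and then matching the surviving lower-order terms with ${\mathcal E}_{1}$ for the correct sign and normalisation of $X=-\nabla \log f$ together with the perfect square. A wrong choice of $X$ spoils the match, so this recognition is the crux; once the identity is established the maximum principle is routine, the only remaining technical point being the standard justification (Hamilton's trick) for differentiating $\min_{M}H$ in time on a closed manifold.
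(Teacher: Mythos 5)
Your proposal is correct and its central identity checks out: specializing the paper's Corollary \ref{PS-cor} to $\gamma=0$, $a=1$, $d=-2$ and then undoing the $2n/\tau$ shift yields exactly your equation
$\partial_t H=-\Delta H-2\langle\nabla\log f,\nabla H\rangle+{\mathcal E}_{1}(S_{ij},-\nabla\log f)-2|\nabla_i\nabla_j\log f-S_{ij}|^{2}$,
and your vector field $X=-\nabla\log f=\nabla(-\log f)$ is the same choice the paper makes (it evaluates ${\mathcal E}_1$ at $\nabla v$ with $v=-\log f$). The difference is one of packaging rather than substance. The paper routes everything through its general machinery: it works with $v=-\log f$ and $P_S=2\Delta v-|\nabla v|^2+S-2n/\tau$, discards the Hessian square only after weakening it to its trace via $|T_{ij}|^2\ge \frac{1}{n}(g^{ij}T_{ij})^2$, completes a square in $P_S+|\nabla v|^2+S+2n/\tau$, and finally adds back $2n/\tau$ to cancel the leftover $2n/\tau^2$. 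You work directly with $H$, never introduce the $n/\tau$ terms, and keep the full square $-2|\nabla_i\nabla_j\log f-S_{ij}|^2$, which is cleaner and discards a (weakly) larger nonpositive quantity. What your write-up still owes is the actual execution of the evolution computation — you only assert the identity you ``expect to reach'' — but the ingredients you name (variation of the connection, the Bochner formula, cancellation of the third-order terms, borrowing $2|S_{ij}|^2$ from ${\mathcal E}_1$ to complete the square) are precisely those carried out in Lemma \ref{lem-1-com}, Proposition \ref{lem-key-1} and Theorem \ref{T-1}, so this is labour rather than a missing idea.

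One point to make explicit: your argument gives $\frac{d}{d\tau}\min_M H\ge 0$, i.e.\ $\min_M H$ is non-decreasing in the backward time $\tau=T-t$, hence non-increasing in $t$. This agrees with what the paper's own proof actually establishes (its last line shows $\max_M(2\Delta v-|\nabla v|^2+S)=-\min_M H$ increases in $t$), but it matches the literal wording ``increases along the flow (\ref{GF})'' only if the monotonicity is read in $\tau$ rather than in $t$. State clearly in which time direction your conclusion holds, so the reader is not misled by the sign.
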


The rest of this article is organized as follows. In Section \ref{ex} we apply the abstract results, Theorems A-E, to a variety of geometric flows. We shall justify that the technical assumptions (\ref{key-cond-2}, \ref{key-cond-3}, \ref{s-t-bound},  \ref{R-S-A}, \ref{assumption in main-C}, \ref{monotone-con}) are either automatically satisfied, or guaranteed by the geometric assumption at time $t=0$. A variety of new Harnack inequalities are obtained. In Section \ref{GEE}, we shall derive general evolution equations. Sections \ref{POA} to \ref{section-E}
are devoted to proving Theorems A-E.

\vspace{0.2cm}
\noindent
{\bf Acknowledgements}
The first author was supported by NSFC (Grants No. 11001203 and 11171143) and Zhejiang Provincial Natural Science Foundation of China (Project No. LY13A010009). The second author was partially supported by the Grant-in-Aid for Scientific Research (C), Japan Society for the Promotion of Science, No. 25400074.

\section{Examples}\label{ex}
\noindent{\bf{(0) Static Riemannian manifold.}} In this case $S_{ij}=0$ and thus for any $a$
$$
{\mathcal E}_{a}(0, {X})=-2R^{ij}X_iX_j
$$
As long as $(M, g)$ has nonnegative Ricci curvature it holds ${\mathcal E}_{a}(0, {X})\le 0$
and moreover the technical assumptions (\ref{key-cond-2}, \ref{key-cond-3}, \ref{s-t-bound}, \ref{R-S-A}, \ref{monotone-con}) all hold.
Applying our main theorems to this example, and reversing the time direction we have:
\begin{cor}\label{nonlinear-static}
Suppose $(M, g)$ is a compact static Riemannian manifold with nonnegative Ricci curvature. Let
$f$ be a positive solution to
\begin{eqnarray}\label{nonlinear heat equation-static}
\frac{\partial f}{\partial t} = {\Delta}f - f\log f.
\end{eqnarray}
Then for all $t>0$
$$
2\Delta\log f +|\nabla\log f|^2+\frac{2n}{t}+\frac {n}{4}\ge 0.
$$
\end{cor}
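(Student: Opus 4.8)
The plan is to recognize this corollary as a direct specialization of the second case of Theorem~\ref{main-A} to the static setting of Example~(0), followed by a reversal of the time variable. First I would set $S_{ij}\equiv 0$, so that $S=g^{ij}S_{ij}=0$; then with $\gamma=1$ and $a=1$ the general equation (\ref{backward-non}) collapses to the backward equation $\partial_t f = -\Delta f + f\log f$, which is exactly the equation governing the second case of Theorem~\ref{main-A}.

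Next I would verify the two hypotheses (\ref{key-cond-3}) required by that case. Since $S=0$ we trivially have $S\ge 0$, and from Example~(0) we have $\mathcal{E}_1(0,X) = -2R^{ij}X_iX_j$, which is $\le 0$ for every vector field $X$ precisely because $(M,g)$ has nonnegative Ricci curvature. Both conditions therefore hold at all times, so the second case of Theorem~\ref{main-A} applies: every positive solution $h$ of $\partial_t h = -\Delta h + h\log h$ on $M\times[0,T)$ satisfies $2\Delta\log h + |\nabla\log h|^2 + 2n/\tau + n/4 \ge 0$, where $\tau=T-t$. Because the manifold is static the flow exists for all time, so $T$ may be taken arbitrarily large.

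The only remaining step is the time reversal, which converts this backward statement into the forward one claimed. Given a positive solution $f$ to $\partial_t f = \Delta f - f\log f$ and any fixed $T>0$, I would set $h(t):=f(T-t)$ for $t\in[0,T)$. Since the metric is static the Laplacian is time-independent, so a one-line computation gives $\partial_t h = -\Delta h + h\log h$; that is, $h$ solves the backward equation to which the previous step applies. Substituting $h(t)=f(T-t)$ into the Harnack inequality, using $\tau=T-t$, and writing $s=T-t$ turns the estimate into $2\Delta\log f + |\nabla\log f|^2 + 2n/s + n/4 \ge 0$ at time $s$. As $t$ ranges over $[0,T)$ and $T>0$ is arbitrary, $s$ ranges over all of $(0,\infty)$; renaming $s$ to $t$ yields the asserted inequality for every $t>0$.

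I do not expect a genuine analytic obstacle, since all of the substance is carried by Theorem~\ref{main-A}. The two points demanding care are purely bookkeeping: confirming that it is the $a=1$ case (which produces the constant $n/4$, rather than the $n/2$ of the $a=2$ case) that matches the stated conclusion, and tracking the sign and the role of $\tau$ through the reversal so that $\tau=T-t$ correctly plays the part of the forward time in the final inequality.
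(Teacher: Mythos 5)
Your proposal is correct and takes essentially the same route as the paper, which in Example~(0) simply notes that for $S_{ij}\equiv 0$ one has ${\mathcal E}_{a}(0,X)=-2R^{ij}X_iX_j\le 0$ under nonnegative Ricci curvature, so all technical hypotheses hold, and then invokes the main theorems ``reversing the time direction.'' Your explicit verification of (\ref{key-cond-3}), the identification of the $a=1$, $\gamma=1$ case of Theorem~\ref{main-A} (which yields the constant $n/4$ rather than $n/2$), and the substitution $h(t)=f(T-t)$ with $T$ arbitrary merely spell out what the paper leaves implicit.
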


We note that gradient estimates of the corresponding elliptic version of (\ref{nonlinear heat equation-static}) have been studied in \cite{Ma}.\par

By Theorem \ref{main-C} and \ref{main-E} and noting
the time direction, we have
\begin{cor}\label{linear-static}
Suppose $(M, g)$ is a compact static Riemannian manifold with nonnegative Ricci curvature. Let
$0<f<1$ be a positive solution to
\begin{eqnarray*}
\frac{\partial f}{\partial t} = {\Delta}f.
\end{eqnarray*}
Then, for all $t$ it holds
$$
|\nabla\log f|^2+\frac{\log f}{t}\le 0.
$$
And 
$$
\min_M\{2\Delta\log f+|\nabla\log f|^2\}
$$
decreases along time.
\end{cor}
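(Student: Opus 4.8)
The plan is to obtain both assertions as direct specializations of Theorems \ref{main-C} and \ref{main-E} to the static tensor $S_{ij}\equiv 0$, combined with a reversal of the time direction. First I would record what the hypotheses become in the static case. Since $S_{ij}=0$ we have $S=0$ and the flow (\ref{GF}) reduces to a fixed metric. The assumption (\ref{assumption in main-C}) of Theorem \ref{main-C}, namely $R_{ij}+S_{ij}\ge 0$, becomes simply $R_{ij}\ge 0$, which is exactly our hypothesis of nonnegative Ricci curvature. Likewise, by the computation in Example (0), $\mathcal{E}_{1}(0,X)=-2R^{ij}X_iX_j$, so the hypothesis (\ref{monotone-con}) of Theorem \ref{main-E}, that $\mathcal{E}_{1}(S_{ij},X)\le 0$ for all $X$, is equivalent to $R_{ij}\ge 0$ and hence also holds.

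For the gradient inequality I would introduce the time-reversed function $\tilde f(x,s):=f(x,T-s)$. Because the metric is static, the spatial operators $\nabla$ and $\Delta$ are unchanged under this substitution, and a positive solution of the forward heat equation $\partial_t f=\Delta f$ with $0<f<1$ corresponds to a positive solution $\tilde f$ of the backward equation $\partial_s\tilde f=-\Delta\tilde f$ with $0<\tilde f<1$. Applying Theorem \ref{main-C} to $\tilde f$ (with $S=0$) yields $|\nabla\log\tilde f|^2+\tau^{-1}\log\tilde f\le 0$, where $\tau=T-s$. Since $\tau=T-s$ equals the original time variable under the substitution $s=T-t$, rewriting in terms of that variable gives precisely $|\nabla\log f|^2+t^{-1}\log f\le 0$, the first claimed estimate.

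For the monotonicity statement I would apply Theorem \ref{main-E} to the same backward solution: with $S=0$ it asserts that $\min_M\bigl(2\Delta\log f+|\nabla\log f|^2\bigr)$ is nondecreasing as $t$ increases along the flow. Transporting this back to $\tilde f$ via $s=T-t$, an increase of $t$ corresponds to a decrease of $s$, so the quantity $\min_M\bigl(2\Delta\log\tilde f+|\nabla\log\tilde f|^2\bigr)$ is nonincreasing in $s$; renaming $s$ as $t$ gives the stated decrease along time. The one point requiring care, and indeed the only place any error could enter, is the bookkeeping of the time reversal: verifying that the backward equation turns into the forward one and that ``increases along the flow'' becomes ``decreases in forward time.'' Because $S_{ij}=0$ makes the metric, and therefore $\nabla$ and $\Delta$, independent of time, this reversal is clean and introduces no extra terms.
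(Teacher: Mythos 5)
Your proposal is correct and follows essentially the same route as the paper, which simply invokes Theorems \ref{main-C} and \ref{main-E} in the static case $S_{ij}\equiv 0$ (so that the hypotheses reduce to $R_{ij}\ge 0$ via $\mathcal{E}_1(0,X)=-2R^{ij}X_iX_j$) and reverses the time direction. Your write-up actually supplies the time-reversal bookkeeping more explicitly than the paper does; aside from a harmless notational swap of $f$ and $\tilde f$ in the monotonicity paragraph, there is nothing to correct.
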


\noindent{\bf{(1) Hamilton's Ricci flow.}} Let $g(t)$ be a solution to the Ricci flow:
\begin{eqnarray*}
\frac{\partial}{\partial t}g_{ij} = - 2R_{ij}.
\end{eqnarray*}
Namely, we have $S_{ij} = {R}_{ij}$ and $S = R$ the scalar curvature. Notice that it is known that the scalar curvature $R$ evolves by $\frac{\partial R}{\partial t} - \Delta R - 2|R_{ij}|^{2} = 0$. Therefore
$$
\frac{\partial R}{\partial \tau} = - \Delta R - 2|R_{ij}|^{2}.
$$
Moreover, we have the twice contracted second Bianchi identity $2{\nabla}^{i}R_{i\ell} - {\nabla}_{\ell}R=0$. Hence, we have
\begin{eqnarray*}
{\mathcal E}_{a}(R_{ij}, {\nabla}v) =   \left(a\frac{\partial R}{\partial \tau} + a\Delta R + 2|R_{ij}|^{2} \right)
= -2(a-1)|R_{ij}|^{2}.
\end{eqnarray*}
This implies that ${\mathcal E}_{a}(R_{ij}, {\nabla}v) \leq 0$ if $a \geq 1$. In particular, (\ref{monotone-con}) holds automatically. 
Moreover, (\ref{key-cond-2}) is equivalent to $|R_{ij}|^{2} \geq  \frac{R^{2}}{n}$ which is automatically satisfied.
Moreover, (\ref{s-t-bound}) is equivalent to
\begin{eqnarray}\label{R geq t}
R \geq -\frac{n}{2t}.
\end{eqnarray}
Notice that the evolution of scalar curvature $R$ under the Ricci flow satisfies
$$
\frac{\partial R}{\partial t} = \Delta R + 2|R_{ij}|^{2} \geq \Delta R + \frac{2}{n}R^2.
$$
By the maximal principle, we get (\ref{R geq t}). 

Summarizing, in Hamilton's Ricci flow we have 
(\ref{key-cond-2}, \ref{s-t-bound}, \ref{monotone-con}) hold
automatically. (\ref{key-cond-3})
holds when $R(0)\ge 0$, and (\ref{R-S-A}, \ref{assumption in main-C})
can be guaranteed  if the curvature operator at time $t=0$ is nonnegative.
We remark that in the Ricci flow the corresponding results have been proved in \cite{C, C-Z, Wu}.

\vspace{0.3cm}
\noindent{\bf{(2) List's extended Ricci flow.}} In \cite{List-1}, List introduced a geometric flow closely related to the Ricci flow:
\begin{eqnarray*}
\frac{\partial}{\partial t}g_{ij} &=& - 2R_{ij} + 4{\nabla}_i \psi {\nabla}_j \psi, \\
\frac{\partial \psi}{\partial t} &=& {\Delta} \psi,
\end{eqnarray*}
where $\psi : M \rightarrow {\mathbb R}$ is a smooth function. In the extended Ricci flow $S_{ij} = {R}_{ij}-2{\nabla}_i \psi {\nabla}_j \psi$ and $S =R -2|\nabla \psi|^{2}$. List \cite{List-1} pointed out that $S$ satisfies the following evolution equation:
\begin{eqnarray*}
\frac{\partial S}{\partial t} = \Delta S + 2|S_{ij}|^{2} + 4|\Delta \psi|^{2}.
\end{eqnarray*}
Therefore,
\begin{eqnarray*}
a\frac{\partial S}{\partial \tau} + a\Delta S + 2 |S_{ij}|^{2}=  - 2(a-1)|S_{ij}|^{2} - 4a|\Delta \psi|^{2}.
\end{eqnarray*}
On the other hand, we have (see \cite{List-1} and Section 2 in \cite{Mu-1}):
$2{\nabla}^{i}S_{i\ell} - {\nabla}_{\ell}S = -4{\Delta}\psi \nabla_{\ell} \psi.$
Therefore we obtain
\begin{eqnarray*}
{\mathcal E}_{a}(S_{ij}, {X})=- 2(a-1)|S_{ij}|^{2} - 4(a-1)|\Delta \psi|^{2} -4 |\Delta \psi - {\nabla}_{X} \psi|^{2}.
\end{eqnarray*}
This tells that ${\mathcal E}_{a}(S_{ij}, {X}) \leq 0$ if $a \geq 1$. \\
On the other hand, if $a=2$, then ${\mathcal E}_{2}(S_{ij}, {X})= - 2|S_{ij}|^{2} - 4|\Delta \psi|^{2} -4 |\Delta \psi - {\nabla}_{X} \psi|^{2}$
and we get the following:
\begin{eqnarray*}
{\mathcal E}_{2}(S_{ij}, {X}) + \frac{2}{n}S^{2} & = & - 2|S_{ij}|^{2} + \frac{2}{n}S^{2} - 4|\Delta \psi|^{2} -4 |\Delta \psi - {\nabla}_{X} \psi|^{2} \\
& \leq & - \frac{2}{n}S^{2} + \frac{2}{n}S^{2} - 4|\Delta \psi|^{2} -4 |\Delta \psi - {\nabla}_{X} \psi|^{2} \\
&=& - 4|\Delta \psi|^{2} -4 |\Delta \psi - {\nabla}_{X} \psi|^{2} \\
&\leq& 0.
\end{eqnarray*}
Namely, (\ref{key-cond-2}) holds. 
Similar as in the Ricci flow, we get
\begin{eqnarray*}
\frac{\partial S}{\partial t} = \Delta S + 2|S_{ij}|^{2} + 4|\Delta \psi|^{2} &\geq& \Delta S + \frac{2}{n}|S|^{2} + 4|\Delta \psi|^{2} \\
&\geq& \Delta S + \frac{2}{n}|S|^{2}
\end{eqnarray*}
and by applying the maximal principle, we get
\begin{eqnarray*}
S \geq -\frac{n}{2t}.
\end{eqnarray*}
Summarizing, in List's extended Ricci flow we have
(\ref{key-cond-2}, \ref{s-t-bound}, \ref{monotone-con}) hold
automatically. Notice that the positivity of $S = R - 2|\nabla \psi|^{2}$ is preserved by the flow, we get (\ref{key-cond-3})
holds when $S(0)\ge 0$.

\vspace{0.3cm}
\noindent{\bf{(3) M{\"{u}}ller's Ricci flow coupled with harmonic map flow.}} Let $(Y, h)$ be a fixed Riemannian manifold. Let $(g(t), \phi(t))$ be the couple consisting of a family of metric $g(t)$ on $M$ and a family of maps $\phi(t)$ from $M$ to $Y$. We call  $(g(t), \phi(t))$ a solution of M{\"{u}}ller's flow \cite{Mu-2} (also known as Ricci flow coupled with harmonic map heat flow) with coupling function $\alpha(t) \geq 0$ if
\begin{eqnarray*}
\frac{\partial}{\partial t}g_{ij} &=& - 2R_{ij} + 2\alpha(t){\nabla}_i \phi {\nabla}_j \phi, \\
\frac{\partial \phi}{\partial t} &=& {\tau} _g \phi,
\end{eqnarray*}
where $ {\tau} _g \phi$ is the tension field of the map $\phi$ with respect to the metric $g(t)$. List's flow is a special case of this flow. 
In this example $S_{ij} = {R}_{ij}-\alpha(t){\nabla}_i \phi {\nabla}_j \phi$, and $S =R -\alpha(t)|\nabla \phi|^{2}$. 
M{\"{u}}ller \cite{Mu-2} proved that $S$ satisfies
\begin{eqnarray*}
\frac{\partial S}{\partial t} = \Delta S + 2|S_{ij}|^{2} + 2\alpha(t)|\tau_{g} \phi|^{2} -(\frac{\partial \alpha(t)}{\partial t})|\nabla \phi|^{2}.
\end{eqnarray*}
Therefore, we get
\begin{eqnarray*}
a\frac{\partial S}{\partial \tau} + a\Delta S + 2 |S_{ij}|^{2}=  - 2(a-1)|S_{ij}|^{2}-2a\alpha(t)|\tau_{g} \phi|^{2} +a (\frac{\partial \alpha(t)}{\partial t})|\nabla \phi|^{2}.
\end{eqnarray*}
On the other hand, we have (see \cite{Mu-2} and Section 2 in \cite{Mu-1}):
\begin{eqnarray*}
2\Big( 2{\nabla}^{i}S_{i\ell} - {\nabla}_{\ell}S\Big) {X}^{\ell} = -4\alpha (t) {\tau_{g}}\psi \nabla_{\ell} \phi{X}^{\ell}
\end{eqnarray*}
and moreover
\begin{eqnarray*}
{\mathcal E}_{a}(S_{ij}, {X}) &=&- 2(a-1)|S_{ij}|^{2}-2a\alpha(t)|\tau_{g} \phi|^{2} +a (\frac{\partial \alpha(t)}{\partial t})|\nabla \phi|^{2} + 4\alpha (t) {\tau_{g}}\psi \nabla_{\ell} \phi{X}^{\ell} \\
&-& 2\alpha (t) {\nabla}_i \phi {\nabla}_j \phi{X}^{i}{X}^{j} \\
&=& - 2(a-1)|S_{ij}|^{2}-2\alpha(t)(a-1)|\tau_{g} \phi|^{2} + a (\frac{\partial \alpha(t)}{\partial t})|\nabla \phi|^{2} \\
&-&2\alpha(t) \Big(|\tau_{g} \phi|^{2}-2{\tau_{g}}\phi \nabla_{\ell} \phi{X}^{\ell} + {\nabla}_i \phi {\nabla}_j \psi{X}^{i}{X}^{j}\Big) \\
&=& - 2(a-1)|S_{ij}|^{2} -2\alpha(t)(a-1)|\tau_{g} \phi|^{2} -2\alpha (t) |\tau_{g} \phi - {\nabla}_{X} \phi|^{2} + a (\frac{\partial \alpha(t)}{\partial t})|\nabla \phi|^{2}.
\end{eqnarray*}
Suppose that $a \geq 1$ and $\frac{\partial \alpha(t)}{\partial t} \leq 0$, we get
\begin{eqnarray*}
{\mathcal E}_{a}(S_{ij}, {X}) \leq 0.
\end{eqnarray*}
In particular, (\ref{monotone-con}) holds automatically. Moreover, (\ref{key-cond-3}) is equivalent to $S \geq 0$.

On the other hand, suppose that $a=2$ and $\frac{\partial \alpha(t)}{\partial t} \leq 0$. Then the above computation tells us that
$$
{\mathcal E}_{2}(S_{ij}, {X}) = - 2|S_{ij}|^{2} -2\alpha(t)|\tau_{g} \phi|^{2} -2\alpha (t) |\tau_{g} \phi - {\nabla}_{X} \phi|^{2} + 2 (\frac{\partial \alpha(t)}{\partial t})|\nabla \phi|^{2}.
$$
Hence
\begin{eqnarray*}
{\mathcal E}_{2}(S_{ij}, {X}) + \frac{2}{n}S^{2} & = & - 2|S_{ij}|^{2} + \frac{2}{n}S^{2} -2\alpha(t)|\tau_{g} \phi|^{2} -2\alpha (t) |\tau_{g} \phi - {\nabla}_{X} \phi|^{2} + 2 (\frac{\partial \alpha(t)}{\partial t})|\nabla \phi|^{2} \\
& \leq & - \frac{2}{n}S^{2} + \frac{2}{n}S^{2} -2\alpha(t)|\tau_{g} \phi|^{2} -2\alpha (t) |\tau_{g} \phi - {\nabla}_{X} \phi|^{2} + 2 (\frac{\partial \alpha(t)}{\partial t})|\nabla \phi|^{2} \\
&=& -2\alpha(t) \Big(|\tau_{g} \phi|^{2} + |\tau_{g} \phi - {\nabla}_{X} \phi|^{2} \Big) + 2 (\frac{\partial \alpha(t)}{\partial t})|\nabla \phi|^{2} \\
&\leq& 0,
\end{eqnarray*}
where notice that $\alpha(t) \geq 0$ and $\frac{\partial \alpha(t)}{\partial t} \leq 0$. Namely, (\ref{key-cond-2}) holds. 

On the other hand, we get the following under $\alpha (t) \geq 0$ and $\frac{\partial \alpha(t)}{\partial t} \leq 0$:
\begin{eqnarray*}
\frac{\partial S}{\partial t} &=& \Delta S + 2|S_{ij}|^{2} + 2\alpha(t)|\tau_{g} \phi|^{2} -(\frac{\partial \alpha(t)}{\partial t})|\nabla \phi|^{2} \\
&\geq & \Delta S + \frac{2}{n}|S_{ij}|^{2} + 2\alpha(t)|\tau_{g} \phi|^{2} -(\frac{\partial \alpha(t)}{\partial t})|\nabla \phi|^{2} \\
&\geq & \Delta S + \frac{2}{n}|S_{ij}|^{2}.
\end{eqnarray*}
By applying the maximal principle, we get
\begin{eqnarray*}
S \geq -\frac{n}{2t}.
\end{eqnarray*}
Therefore (\ref{s-t-bound}) holds, where notice that we have ${\mathcal E}_{1}(S_{ij}, {X}) \leq 0$. \\
Summarizing, in M\"{u}ller's flow we have
(\ref{key-cond-2}, \ref{s-t-bound}, \ref{monotone-con}) hold
automatically. Notice that the positivity of $S$ is again preserved by the flow, we get (\ref{key-cond-3})
holds when $S(0)\ge 0$.

\section{General evolution equations}\label{GEE}

In this section, we shall prove general evolution equations of general Harnack quantities under the geometric flow, which are useful to prove the main results in the backward case. See Theorems \ref{T-1} and \ref{T-2} stated below. In the Ricci flow case, such general evolution equations are firstly proved by Cao \cite{C}. Theorems \ref{T-1} and \ref{T-2} can be seen as generalizations of Lemma 2.1 and Lemma 3.4 in \cite{C} respectively.

\subsection{Case of $u = - \log f$}\label{sub-u}

Let $M$ be a closed Riemannian manifold with a Riemannian metric $g_{ij}(t)$ evolving by a geometric flow $\partial_{t}g_{ij} = -2{S}_{ij}$. Let $f$ be a positive solution of the following nonlinear backward heat equation with potential term $-cS$:
\begin{eqnarray*}
\frac{\partial f}{\partial t} = -{\Delta}_{}f + \gamma f\log f- cSf,
\end{eqnarray*}
where $c$ and $\gamma$ are constants. In what follows, let $u = - \log f$. By a direct computation, we see that $u$ satsifies
\begin{eqnarray}\label{u-differ-1}
\frac{\partial u}{\partial t} = -\Delta u + |\nabla u|^{2} + cS + \gamma u.
\end{eqnarray}
Let $\tau = T-t$. Then $f$ satisfies
\begin{eqnarray*}
\frac{\partial f}{\partial \tau} = {\Delta}_{}f + cSf - \gamma f\log f
\end{eqnarray*}
and $u$ satisfies
\begin{eqnarray}\label{u-differ}
\frac{\partial u}{\partial \tau} = \Delta u - |\nabla u|^{2} - cS - \gamma u.
\end{eqnarray}
On the other hand, it is known that $\Delta(|\nabla u|^{2}) = 2{\nabla}^{i}u \Delta ({\nabla}_{i} u) + 2|\nabla \nabla u|^{2}$. Since we also have $\Delta ({\nabla}_{i} u) = {\nabla}_{i}({\Delta u}) + {R}_{ij}{\nabla}^{j}u$, we obtain $\Delta(|\nabla u|^{2}) = 2{\nabla}^{i}u ({\nabla}_{i}({\Delta u}) + {R}_{ij}{\nabla}^{j}u) + 2|\nabla \nabla u|^{2}$.
Equivalently, we get
\begin{eqnarray}\label{r-formula}
2{\nabla}^{i}({\Delta}u){\nabla}_{i}u &=& {\Delta}(|\nabla u|^{2})-2{R}_{ij}{\nabla}^{i}u{\nabla}^{j}u -2|\nabla \nabla u|^{2}.
\end{eqnarray}
Then we have
\begin{lem}\label{lem-1-com}
Under the above situation, the following holds:
\begin{eqnarray*}\label{lem-1-1}
\frac{\partial}{\partial \tau}({\Delta}u) &=& \Delta (\Delta u) - \Delta(|\nabla u|^{2}) -c\Delta S - 2{S}^{ij}{\nabla}_{i}{\nabla}_{j}u -  \Big(2{\nabla}^{i}S_{i\ell} - {\nabla}_{\ell}S \Big){\nabla}^{\ell}u \\
&-& \gamma \Delta u, \\
\frac{\partial}{\partial \tau}(|\nabla u|^{2}) &=& \Delta(|\nabla u|^{2}) -2|\nabla \nabla u|^{2} -2{\nabla}^{i}(|\nabla u|^{2}){\nabla}_{i}u -2c{\nabla}^{i}S{\nabla}_{i}u \\
&-& 2(R^{ij} + S^{ij}){\nabla}_{i}u{\nabla}_{j}u - 2 \gamma|\nabla u|^{2}.
\end{eqnarray*}
\end{lem}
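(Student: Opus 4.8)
The plan is to prove both evolution equations by differentiating the defining expressions $\Delta u = g^{ij}\nabla_i\nabla_j u$ and $|\nabla u|^2 = g^{ij}\nabla_i u\,\nabla_j u$ with respect to $\tau$, carefully tracking the contributions coming from the $\tau$-dependence of the metric. The two facts I would record at the outset are $\frac{\partial}{\partial\tau}g^{ij} = -2S^{ij}$ (which follows by differentiating $g^{ik}g_{kj}=\delta^i_j$ together with $\frac{\partial}{\partial\tau}g_{ij}=2S_{ij}$), and the first-variation formula for the Christoffel symbols $\frac{\partial}{\partial\tau}\Gamma^k_{ij} = g^{k\ell}\big(\nabla_i S_{j\ell}+\nabla_j S_{i\ell}-\nabla_\ell S_{ij}\big)$, obtained from the standard variation formula applied to $\frac{\partial}{\partial\tau}g_{ij}=2S_{ij}$.

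For the first identity, I would write $\frac{\partial}{\partial\tau}(\Delta u) = \big(\frac{\partial}{\partial\tau}g^{ij}\big)\nabla_i\nabla_j u + g^{ij}\frac{\partial}{\partial\tau}(\nabla_i\nabla_j u)$. Since $\nabla_i\nabla_j u = \partial_i\partial_j u - \Gamma^k_{ij}\partial_k u$ and $\partial_\tau$ commutes with the coordinate derivatives, the second term equals $\Delta(\partial_\tau u) - g^{ij}(\partial_\tau\Gamma^k_{ij})\partial_k u$. Contracting the Christoffel variation gives $g^{ij}\frac{\partial}{\partial\tau}\Gamma^k_{ij} = g^{k\ell}(2\nabla^i S_{i\ell} - \nabla_\ell S)$, using $g^{ij}\nabla_\ell S_{ij}=\nabla_\ell S$, so that piece contributes exactly $-(2\nabla^i S_{i\ell}-\nabla_\ell S)\nabla^\ell u$, while the metric-variation term gives $-2S^{ij}\nabla_i\nabla_j u$. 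Substituting the evolution equation (\ref{u-differ}) into $\Delta(\partial_\tau u)$ produces $\Delta(\Delta u)-\Delta(|\nabla u|^2)-c\Delta S-\gamma\Delta u$, and assembling these pieces yields the first claimed formula.

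For the second identity, I would differentiate $|\nabla u|^2 = g^{ij}\nabla_i u\,\nabla_j u$. The metric-variation term now contributes $-2S^{ij}\nabla_i u\,\nabla_j u$, while the remaining term is $2g^{ij}\nabla_i u\,\nabla_j(\partial_\tau u) = 2\nabla^i(\partial_\tau u)\nabla_i u$, since $\nabla_j u=\partial_j u$ commutes with $\partial_\tau$. Inserting (\ref{u-differ}) gives $2\nabla^i(\Delta u)\nabla_i u - 2\nabla^i(|\nabla u|^2)\nabla_i u - 2c\nabla^i S\,\nabla_i u - 2\gamma|\nabla u|^2$. The only nontrivial step is to rewrite the first summand using the Bochner-type identity (\ref{r-formula}), which replaces $2\nabla^i(\Delta u)\nabla_i u$ by $\Delta(|\nabla u|^2)-2R_{ij}\nabla^i u\,\nabla^j u-2|\nabla\nabla u|^2$. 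Combining the Ricci term with the $-2S^{ij}\nabla_i u\,\nabla_j u$ coming from the metric into $-2(R^{ij}+S^{ij})\nabla_i u\,\nabla_j u$ gives the second claimed formula.

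The computation is essentially mechanical, so the main obstacle is bookkeeping rather than any conceptual difficulty: one must correctly account for the sign changes induced by $\tau=T-t$ (which flip $\frac{\partial}{\partial t}g^{ij}=2S^{ij}$ into $\frac{\partial}{\partial\tau}g^{ij}=-2S^{ij}$), keep the metric-variation and Christoffel-variation terms distinct, and correctly identify the contracted Christoffel variation with $2\nabla^i S_{i\ell}-\nabla_\ell S$. I would double-check these sign conventions by specializing to the Ricci flow, where $S_{ij}=R_{ij}$ and the contracted second Bianchi identity forces $2\nabla^i S_{i\ell}-\nabla_\ell S=0$, recovering the known formulas of Cao \cite{C}.
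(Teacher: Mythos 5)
Your proposal is correct and follows essentially the same route as the paper's proof: compute $\partial_\tau g^{ij}$ and the contracted variation $g^{ij}\partial_\tau\Gamma^k_{ij}=g^{k\ell}(2\nabla^i S_{i\ell}-\nabla_\ell S)$, substitute the evolution equation for $u$, and convert $2\nabla^i(\Delta u)\nabla_i u$ via the Bochner-type identity (\ref{r-formula}). The only cosmetic difference is that the paper phrases the intermediate steps with $t$-derivatives (and derives the Christoffel variation in normal coordinates) while you work directly in $\tau$; the bookkeeping is identical.
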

\begin{proof}
These formulas follow from direct computations as follows. First of all, recall that
\begin{eqnarray*}
{\Gamma}^{k}_{ij} = \frac{1}{2}g^{k \ell} \Big(\frac{\partial}{\partial {x}^{i}}g_{j\ell}  + \frac{\partial}{\partial {x}^{j}}g_{i\ell} - \frac{\partial}{\partial {x}^{\ell}}g_{ij} \Big).
\end{eqnarray*}
By working with a normal coordinate, we obtain
\begin{eqnarray*}
\frac{\partial}{\partial t}{\Gamma}^{k}_{ij} &=& \frac{1}{2}g^{k \ell} \Big({\nabla}_{i}(-2{S}_{j{\ell}} + {\nabla}_{j}(-2{S}_{i{\ell}}) - {\nabla}_{\ell}(-2{S}_{ij}) \Big) \\
&=& -{g}^{k \ell} \Big({\nabla}_{i}{S}_{j{\ell}} + {\nabla}_{j}{S}_{i{\ell}} - {\nabla}_{\ell}{S}_{ij} \Big).
\end{eqnarray*}
Therefore, the following holds:
\begin{eqnarray*}
g^{ij} \Big(\frac{\partial}{\partial t}{\Gamma}^{k}_{ij}\Big) &=& -g^{ij} {g}^{k \ell} \Big({\nabla}_{i}{S}_{j{\ell}} + {\nabla}_{j}{S}_{i{\ell}} - {\nabla}_{\ell}{S}_{ij} \Big) = - {g}^{k \ell} \Big({\nabla}^{j}{S}_{j{\ell}} + {\nabla}^{i}{S}_{i{\ell}} - {\nabla}_{\ell}{S} \Big) \\
&=& -{g}^{k \ell} \Big( 2{\nabla}^{i}S_{i\ell}- {\nabla}_{\ell}{S} \Big).
\end{eqnarray*}
By using this and (\ref{u-differ-1}), we have
\begin{eqnarray*}\label{f-1}
\frac{\partial}{\partial \tau} (\Delta u) &=& -2{S}^{ij}{\nabla}_{i}{\nabla}_{j}u - \Delta(\frac{\partial u}{\partial t}) + g^{ij}\Big(\frac{\partial}{\partial t}\Gamma^{k}_{ij} \Big)\nabla_{k}u \\
&=& -2{S}^{ij}{\nabla}_{i}{\nabla}_{j}u - \Delta(-\Delta u + |\nabla u|^{2} + cS + \gamma u ) \\
&-& g^{k \ell} (2\nabla^{i}S_{i\ell} - \nabla_{\ell}S) \nabla_{k}u \\
&=& \Delta (\Delta u) - \Delta(|\nabla u|^{2}) -c\Delta S -\gamma \Delta u - 2{S}^{ij}{\nabla}_{i}{\nabla}_{j}u \\
&-& \Big(2{\nabla}^{i}S_{i\ell} - {\nabla}_{\ell}S \Big){\nabla}^{\ell}u.
\end{eqnarray*}
On the other hand, we get
\begin{eqnarray*}
\frac{\partial}{\partial \tau}(|\nabla u|^{2}) &=& -2{S}^{ij}{\nabla}_{i}u{\nabla}_{j}u - 2 \nabla^{i}(\frac{\partial u}{\partial t}) \nabla_{i} u \\
&=& -2{S}^{ij}{\nabla}_{i}u{\nabla}_{j}u - 2 \nabla^{i}(-\Delta u + |\nabla u|^{2} + cS + \gamma u) \nabla_{i} u \\
&=& -2{S}^{ij}{\nabla}_{i}u{\nabla}_{j}u + 2 \nabla^{i}(\Delta u)\nabla_{i} u - 2 \nabla^{i}(|\nabla u|^{2})\nabla_{i} u \\
&-& 2c \nabla^{i}S\nabla_{i} u - 2 \gamma|\nabla u|^{2},
\end{eqnarray*}
where we used (\ref{u-differ-1}). Since we have (\ref{r-formula}), we obtain the desired formula.
\end{proof}

By using Lemma \ref{lem-1-com}, we prove the following which is used to prove Theorems \ref{main-B} and \ref{main-C}:
\begin{prop}\label{lem-key-1}
Let $g(t)$ be a solution to the geometric flow (\ref{GF}) and $u$ satisfies (\ref{u-differ}). Let
\begin{eqnarray*}\label{Def-H-1a}
H_{S} = \alpha \Delta u - \beta |\nabla u|^{2} + a{S} + b\frac{u}{\tau} + d\frac{n}{\tau},
\end{eqnarray*}
where $\alpha, \beta, a, b$ and $d$ are constants. Then $H_{S}$ satisfies the following evolution equatios:
\begin{eqnarray*}
\frac{\partial H_{S}}{\partial \tau}
&=& {\Delta}H_{S} - 2{\nabla}^{i}H_{S}{\nabla}_{i}u + 2(a+\beta c){\nabla}^{i}S{\nabla}_{i}u-2(\alpha-\beta)|\nabla \nabla u|^{2} \\
&-&2{\alpha}S^{ij}{\nabla}_{i}{\nabla}_{j}u  + \frac{b}{\tau}|\nabla u|^{2} - \frac{b}{\tau}cS - \frac{b}{\tau^{2}}u - d\frac{n}{\tau^{2}} \\
&+& a\frac{\partial S}{\partial \tau} - \Big(a + \alpha c \Big) \Delta S - \alpha \Big(2{\nabla}^{i}S_{i\ell} - {\nabla}_{\ell}S \Big){\nabla}^{\ell}u \\
&-& 2\Big(\alpha R^{ij} - \beta(R^{ij} + S^{ij})\Big){\nabla}_{i}u{\nabla}_{j}u - \alpha \gamma \Delta u + 2\beta \gamma |\nabla u|^{2} -b\gamma\frac{u}{\tau}.
\end{eqnarray*}
\end{prop}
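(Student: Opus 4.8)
The plan is to compute $\partial_\tau H_S$ directly, differentiating term by term, and then reorganize the outcome so that the diffusion operator $\Delta H_S$ and the drift term $-2\nabla^i H_S\nabla_i u$ are extracted, leaving the explicit error displayed in the statement. First I would use linearity to write $\partial_\tau H_S = \alpha\,\partial_\tau(\Delta u) - \beta\,\partial_\tau(|\nabla u|^2) + a\,\partial_\tau S + b\,\partial_\tau(u/\tau) - dn/\tau^2$, and substitute the two evolution equations from Lemma \ref{lem-1-com}. For the fourth term I would compute $\partial_\tau(u/\tau) = \tau^{-1}\partial_\tau u - u/\tau^2$ and feed in (\ref{u-differ}), giving $\partial_\tau(u/\tau) = \tau^{-1}(\Delta u - |\nabla u|^2 - cS - \gamma u) - u/\tau^2$; the $d$-term contributes only $-dn/\tau^2$.

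The crucial step is producing the drift term attached to the $\Delta u$ part of $H_S$. The evolution of $\Delta u$ in Lemma \ref{lem-1-com} carries a term $-\alpha\Delta(|\nabla u|^2)$ rather than a transport term, so I would apply the Bochner-type identity (\ref{r-formula}) in the form $-\Delta(|\nabla u|^2) = -2\nabla^i(\Delta u)\nabla_i u - 2R_{ij}\nabla^i u\nabla^j u - 2|\nabla\nabla u|^2$. This exposes exactly $-2\alpha\nabla^i(\Delta u)\nabla_i u$, which is the $\Delta u$-contribution to $-2\nabla^i H_S\nabla_i u$, while simultaneously generating the curvature term $-2\alpha R_{ij}\nabla^i u\nabla^j u$ and the Hessian term $-2\alpha|\nabla\nabla u|^2$. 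The remaining drift pieces, namely $+2\beta\nabla^i(|\nabla u|^2)\nabla_i u$ from $-\beta|\nabla u|^2$ and the $\tau^{-1}$-pieces from $bu/\tau$, are already present.

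Next I would reassemble $\Delta H_S = \alpha\Delta(\Delta u) - \beta\Delta(|\nabla u|^2) + a\Delta S + (b/\tau)\Delta u$ and $-2\nabla^i H_S\nabla_i u = -2\alpha\nabla^i(\Delta u)\nabla_i u + 2\beta\nabla^i(|\nabla u|^2)\nabla_i u - 2a\nabla^i S\nabla_i u - (2b/\tau)|\nabla u|^2$ by completing coefficients via add-and-subtract. Three corrections arise: the $\Delta S$-terms as $-\alpha c\Delta S = a\Delta S - (a+\alpha c)\Delta S$; the $\nabla^i S\nabla_i u$-terms as $2\beta c\nabla^i S\nabla_i u = -2a\nabla^i S\nabla_i u + 2(a+\beta c)\nabla^i S\nabla_i u$; and the $\tau^{-1}|\nabla u|^2$-terms as $-(b/\tau)|\nabla u|^2 = -(2b/\tau)|\nabla u|^2 + (b/\tau)|\nabla u|^2$. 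The residuals $-(a+\alpha c)\Delta S$, $+2(a+\beta c)\nabla^i S\nabla_i u$ and $+(b/\tau)|\nabla u|^2$ are precisely the matching error terms in the claimed identity.

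Finally I would collect the leftover terms and match them against the statement: the Hessian terms combine as $2\beta|\nabla\nabla u|^2 - 2\alpha|\nabla\nabla u|^2 = -2(\alpha-\beta)|\nabla\nabla u|^2$; the quadratic gradient terms as $-2(\alpha R^{ij} - \beta(R^{ij}+S^{ij}))\nabla_i u\nabla_j u$; and there remain $-2\alpha S^{ij}\nabla_i\nabla_j u$, the divergence term $-\alpha(2\nabla^i S_{i\ell} - \nabla_\ell S)\nabla^\ell u$, the $\gamma$-linear terms $-\alpha\gamma\Delta u + 2\beta\gamma|\nabla u|^2 - b\gamma u/\tau$, and the lower-order block $a\,\partial_\tau S - (b/\tau)cS - (b/\tau^2)u - dn/\tau^2$. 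I expect the only real difficulty to be the bookkeeping, in particular recognizing that the drift term for $\Delta u$ is hidden inside $-\alpha\Delta(|\nabla u|^2)$ through (\ref{r-formula}), and tracking the factor-of-two mismatch in the $\tau^{-1}|\nabla u|^2$ coefficient, rather than any conceptual obstacle.
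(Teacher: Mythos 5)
Your proposal is correct and follows essentially the same route as the paper: differentiate $H_S$ term by term using Lemma \ref{lem-1-com} and (\ref{u-differ}), use the identity (\ref{r-formula}) to trade the leftover $-\alpha\Delta(|\nabla u|^2)$ for the drift piece $-2\alpha\nabla^i(\Delta u)\nabla_i u$ plus the Ricci and Hessian terms, and complete $\Delta H_S$ and $-2\nabla^i H_S\nabla_i u$ by the same add-and-subtract corrections in the $\Delta S$, $\nabla^i S\nabla_i u$ and $\tau^{-1}|\nabla u|^2$ coefficients. The only cosmetic difference is that the paper expands $-2\nabla^i H_S\nabla_i u$ and solves for $-\alpha\Delta(|\nabla u|^2)$, whereas you substitute (\ref{r-formula}) directly; the bookkeeping is identical.
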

\begin{proof}
Notice that we have the following by the definition of $H_{S}$:
\begin{eqnarray*}
{\Delta}H_{S} &=& \alpha \Delta(\Delta u) - \beta \Delta(|\nabla u|^{2}) + a\Delta{S} + \frac{b}{\tau}\Delta u.
\end{eqnarray*}
By (\ref{u-differ}) and Lemma \ref{lem-1-com}, we obtain
\begin{eqnarray*}
\frac{\partial H_{S}}{\partial \tau} &=& \alpha \frac{\partial}{\partial \tau} (\Delta u) - \beta \frac{\partial}{\partial \tau}(|\nabla u|^{2}) + a\frac{\partial S}{\partial \tau} + \frac{b}{\tau}\frac{\partial u}{\partial \tau} - b\frac{u}{\tau^{2}}- d\frac{n}{\tau^{2}} \\
&=& \alpha \Big(\Delta (\Delta u) - \Delta(|\nabla u|^{2}) -c\Delta S - 2{S}^{ij}{\nabla}_{i}{\nabla}_{j}u - (2{\nabla}^{i}S_{i\ell} - {\nabla}_{\ell}S){\nabla}^{\ell}u - \gamma \Delta u \Big) \\
&-& \beta \Big(\Delta(|\nabla u|^{2}) -2|\nabla \nabla u|^{2} -2{\nabla}^{i}(|\nabla u|^{2}){\nabla}_{i}u -2c{\nabla}^{i}S{\nabla}_{i}u \\
&-& 2(R^{ij} + S^{ij}){\nabla}_{i}u{\nabla}_{j}u - 2 \gamma|\nabla u|^{2} \Big) + a\frac{\partial S}{\partial \tau} + \frac{b}{\tau}\Big(\Delta u - |\nabla u|^{2} - cS - \gamma u \Big) \\
&-& b\frac{u}{\tau^{2}} - d\frac{n}{\tau^{2}} \\
&=& {\Delta}H_{S} - \alpha \Delta(|\nabla u|^{2}) -\alpha c\Delta S - 2\alpha{S}^{ij}{\nabla}_{i}{\nabla}_{j}u + 2\beta|\nabla \nabla u|^{2} \\
&+& 2{\beta}{\nabla}^{i}(|\nabla u|^{2}){\nabla}_{i}u + 2{\beta}c{\nabla}^{i}S{\nabla}_{i}u + 2\beta (R^{ij} + S^{ij}){\nabla}_{i}u {\nabla}_{j}u - b\frac{|\nabla u|^{2}}{\tau} \\
&-& b\frac{cS}{\tau} - b\frac{u}{\tau^{2}} - d\frac{n}{\tau^{2}} + a\Big(\frac{\partial S}{\partial \tau}- \Delta S   \Big) - {\alpha}\Big(2{\nabla}^{i}S_{i\ell} - {\nabla}_{\ell}S \Big){\nabla}^{\ell}u
\\
&-& \alpha \gamma \Delta u + 2\beta \gamma |\nabla u|^{2} -b\gamma\frac{u}{\tau}.
\end{eqnarray*}
On the other hand, we also have the following by the definition of $H_{S}$:
\begin{eqnarray*}\label{Def-H-22b}
{\nabla}^{i}H_{S} &=& \alpha \nabla^{i}(\Delta u) - \beta \nabla^{i}(|\nabla u|^{2}) + a\nabla^{i}{S} + \frac{b}{\tau}\nabla^{i}u
\end{eqnarray*}
This and (\ref{r-formula}) imply
\begin{eqnarray*}
-2{\nabla}^{i}H_{S}{\nabla}_{i}u &=& -2{\alpha}{\nabla}^{i}(\Delta u){\nabla}_{i}u + 2\beta{\nabla}^{i}(|\nabla u|^{2}){\nabla}_{i}u - 2a{\nabla}^{i}S{\nabla}_{i}u - \frac{2b}{\tau}|\nabla u|^{2} \\
&=& -\alpha {\Delta}(|\nabla u|^{2}) + 2\alpha{R}_{ij}{\nabla}^{i}u{\nabla}^{j}u + 2 \alpha|\nabla \nabla u|^{2} + 2\beta{\nabla}^{i}(|\nabla u|^{2}){\nabla}_{i}u \\
&-& 2a{\nabla}^{i}S{\nabla}_{i}u - \frac{2b}{\tau}|\nabla u|^{2}.
\end{eqnarray*}
Equivalently, we have
\begin{eqnarray*}
-\alpha {\Delta}(|\nabla u|^{2}) &=& -2{\nabla}^{i}H_{S}{\nabla}_{i}u - 2\alpha{R}_{ij}{\nabla}^{i}u{\nabla}^{j}u - 2 \alpha|\nabla \nabla u|^{2} - 2\beta{\nabla}^{i}(|\nabla u|^{2}){\nabla}_{i}u \\
&+& 2a{\nabla}^{i}S{\nabla}_{i}u + \frac{2b}{\tau}|\nabla u|^{2}.
\end{eqnarray*}
The desired result now follows from the above equation on $\frac{\partial H_{S}}{\partial \tau}$ and this equation.
\end{proof}
Let us introduce
\begin{defn}\label{Def-HJ-BA}
Let $g(t)$ evolve by (\ref{GF}) and let $S$ be the trace of $S_{ij}$. Let $X=X^{i}\frac{\partial}{\partial x^{i}}$ be a vector field on $M$. We define
\begin{eqnarray*}
{\mathcal E}_{(a, c, \alpha, \beta)}(S_{ij}, X) &=& a\frac{\partial S}{\partial \tau} - \Big(a + \alpha c \Big) \Delta S + \frac{ \alpha^{2}}{2(\alpha -\beta)}|S_{ij}|^{2} - \alpha \Big(2{\nabla}^{i}S_{i\ell} - {\nabla}_{\ell}S \Big){X}^{\ell} \\
&-& 2\Big(\alpha R^{ij} - \beta(R^{ij} + S^{ij})\Big){X}_{i}{X}_{j},
\end{eqnarray*}
where $a, c, \alpha$, $\beta$ are constants and $\alpha \not= \beta$.
\end{defn}
Then we get
\begin{thm}\label{T-1}
Suppose that $\alpha \not=0$ and $\alpha \not=\beta$. Then, the evolution equation in Proposition \ref{lem-key-1} can be rewritten as follows:
\begin{eqnarray*}\label{Def-H}
\frac{\partial H_{S}}{\partial \tau} &=& \Delta{H_{S}} -2{\nabla}^{i}H_{S}{\nabla}_{i}u - 2(\alpha - \beta)\Big|\nabla_{i}\nabla_{j}u + \frac{\alpha}{2(\alpha-\beta)}S_{ij}-\frac{\lambda}{2\tau}g_{ij} \Big|^{2} \\
&+& 2(a+\beta c){\nabla}^{i}u{\nabla}_{i}S - \frac{2(\alpha -\beta)}{\alpha}\frac{\lambda}{\tau}H_{S} + (\alpha -\beta)\frac{n{\lambda}^{2}}{2\tau^{2}} + \Big(b - \frac{2(\alpha-\beta)\lambda {\beta}}{\alpha} \Big)\frac{|\nabla u|^{2}}{\tau} \\
&+& \Big(\frac{2(\alpha-\beta)}{\alpha}a \lambda - \alpha{\lambda} - bc \Big)\frac{S}{\tau} + \Big(\frac{2(\alpha-\beta)\lambda}{\alpha}-1 \Big)\frac{b}{\tau^{2}}u + \Big(\frac{2(\alpha-\beta)\lambda}{\alpha}-1 \Big)\frac{d}{\tau^{2}}n \\
&-& \alpha \gamma \Delta u + 2\beta \gamma |\nabla u|^{2} -b\gamma\frac{u}{\tau} + {\mathcal E}_{(a,c, \alpha, \beta)}(S_{ij}, {\nabla}u),
\end{eqnarray*}
where $\lambda$ is a constant.
\end{thm}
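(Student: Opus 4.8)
The plan is to treat the statement as a purely algebraic rewriting of the evolution equation furnished by Proposition~\ref{lem-key-1}; no further analytic input is required. Two structural manipulations do all the work: a completion of squares on the second-order terms, followed by the elimination of $\Delta u$ via the definition of $H_S$. The two nondegeneracy hypotheses are exactly what these steps need: $\alpha\neq\beta$ makes the coefficient $\frac{\alpha}{2(\alpha-\beta)}$ meaningful, and $\alpha\neq 0$ lets us solve for $\Delta u$.

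First I would isolate the two genuinely second-order contributions in Proposition~\ref{lem-key-1}, namely $-2(\alpha-\beta)|\nabla\nabla u|^2$ and $-2\alpha S^{ij}\nabla_i\nabla_j u$, and absorb them into a single perfect square. Expanding
\[
-2(\alpha-\beta)\Big|\nabla_i\nabla_j u+\frac{\alpha}{2(\alpha-\beta)}S_{ij}-\frac{\lambda}{2\tau}g_{ij}\Big|^2
\]
and contracting with $g^{ij}\nabla_i\nabla_j u=\Delta u$, $g^{ij}S_{ij}=S$ and $g^{ij}g_{ij}=n$, one checks that this square reproduces precisely $-2(\alpha-\beta)|\nabla\nabla u|^2-2\alpha S^{ij}\nabla_i\nabla_j u$ together with the cross and constant contributions $-\frac{\alpha^2}{2(\alpha-\beta)}|S_{ij}|^2-\frac{(\alpha-\beta)\lambda^2}{2\tau^2}n+\frac{2(\alpha-\beta)\lambda}{\tau}\Delta u+\frac{\alpha\lambda}{\tau}S$. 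The value $\frac{\alpha}{2(\alpha-\beta)}$ is the unique one making the $S^{ij}\nabla_i\nabla_j u$ cross term come out right. Consequently, replacing the two Hessian terms by the square costs exactly the negatives of those four quantities.

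Next I would eliminate the spurious $\Delta u$ just produced, namely the term $-\frac{2(\alpha-\beta)\lambda}{\tau}\Delta u$. From $H_S=\alpha\Delta u-\beta|\nabla u|^2+aS+b\frac{u}{\tau}+d\frac{n}{\tau}$ I solve $\Delta u=\frac{1}{\alpha}\big(H_S+\beta|\nabla u|^2-aS-b\frac{u}{\tau}-d\frac{n}{\tau}\big)$ and substitute. This turns that single term into the advertised $-\frac{2(\alpha-\beta)}{\alpha}\frac{\lambda}{\tau}H_S$ plus proportionate multiples of $\frac{|\nabla u|^2}{\tau}$, $\frac{S}{\tau}$, $\frac{u}{\tau^2}$ and $\frac{n}{\tau^2}$.

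Finally I would collect like terms. The $\frac{|\nabla u|^2}{\tau}$, $\frac{S}{\tau}$, $\frac{u}{\tau^2}$ and $\frac{n}{\tau^2}$ coefficients assemble from three sources, namely the ones already present in Proposition~\ref{lem-key-1}, the residue of the completed square, and the residue of the $\Delta u$ substitution, and reduce to the four bracketed coefficients in the statement; the $\frac{n\lambda^2}{2\tau^2}$ term likewise falls out. The five remaining terms $a\frac{\partial S}{\partial\tau}$, $-(a+\alpha c)\Delta S$, $\frac{\alpha^2}{2(\alpha-\beta)}|S_{ij}|^2$, $-\alpha(2\nabla^i S_{i\ell}-\nabla_\ell S)\nabla^\ell u$ and $-2(\alpha R^{ij}-\beta(R^{ij}+S^{ij}))\nabla_i u\nabla_j u$ are, by Definition~\ref{Def-HJ-BA} with $X=\nabla u$, nothing but ${\mathcal E}_{(a,c,\alpha,\beta)}(S_{ij},\nabla u)$, while $2(a+\beta c)\nabla^i u\nabla_i S$ and $-\alpha\gamma\Delta u+2\beta\gamma|\nabla u|^2-b\gamma\frac{u}{\tau}$ are carried through untouched. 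The only genuine difficulty is clerical, tracking signs and the $\frac{1}{\alpha}$ factors across the substitution, so I would confirm the identity coefficient by coefficient rather than trust a single pass.
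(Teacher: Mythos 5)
Your proposal is correct and follows essentially the same route as the paper's own proof: expand the completed square to recover $-2(\alpha-\beta)|\nabla\nabla u|^{2}-2\alpha S^{ij}\nabla_i\nabla_j u$ plus the residual terms, use the identity $-\frac{2(\alpha-\beta)}{\alpha}\frac{\lambda}{\tau}H_S=-\frac{2(\alpha-\beta)\lambda}{\tau}\Delta u+\cdots$ coming from the definition of $H_S$ to absorb the spurious Laplacian, and package the remaining curvature terms into ${\mathcal E}_{(a,c,\alpha,\beta)}(S_{ij},\nabla u)$ via Definition~\ref{Def-HJ-BA}. The coefficient bookkeeping you describe matches the statement, so nothing is missing.
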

\begin{proof}
Notice that a direct computation implies
\begin{eqnarray*}
-2(\alpha-\beta)\Big|\nabla_{i}\nabla_{j}u + \frac{\alpha}{2(\alpha-\beta)}S_{ij}-\frac{\lambda}{2\tau}g_{ij} \Big|^{2} &=& -2(\alpha-\beta)|\nabla \nabla u|^{2} - 2{\alpha}S^{ij}{\nabla}_{i}{\nabla}_{j}u \\
&+& 2(\alpha-\beta)\frac{\lambda}{\tau}\Delta u - \frac{\alpha^{2}}{2(\alpha-\beta)}|S_{ij}|^{2} \\
&+& \frac{\lambda}{\tau}\alpha S - (\alpha-\beta)\frac{\lambda^{2}}{2{\tau}^{2}}n.
\end{eqnarray*}
By this and Proposition \ref{lem-key-1}, we obtain
\begin{eqnarray*}\label{Def-H}
\frac{\partial H_{S}}{\partial \tau} &=& \Delta{H_{S}} -2{\nabla}^{i}H_{S}{\nabla}_{i}u - 2(\alpha - \beta)\Big|\nabla_{i}\nabla_{j}u + \frac{\alpha}{2(\alpha-\beta)}S_{ij}-\frac{\lambda}{2\tau}g_{ij} \Big|^{2} \\
&-& 2\Big(\alpha R^{ij} - \beta(R^{ij} + S^{ij})\Big){\nabla}_{i}u{\nabla}_{j}u + 2(a+\beta c){\nabla}^{i}S{\nabla}_{i}u + (\alpha-\beta)\frac{\lambda^{2}}{2{\tau}^{2}}n \\
&-& 2(\alpha-\beta)\frac{\lambda}{\tau}\Delta u - \frac{\lambda}{\tau}\alpha S  + \frac{b}{\tau}|\nabla u|^{2} + a\frac{\partial S}{\partial \tau} - \Big(a + \alpha c \Big) \Delta S - \alpha \Big(2{\nabla}^{i}S_{i\ell} - {\nabla}_{\ell}S \Big){\nabla}^{\ell}u \\
&+& \frac{\alpha^{2}}{2(\alpha-\beta)}|S_{ij}|^{2} - \frac{b}{\tau}cS - \frac{b}{\tau^{2}}u - d\frac{n}{\tau^{2}} - \alpha \gamma \Delta u + 2\beta \gamma |\nabla u|^{2} -b\gamma\frac{u}{\tau}.
\end{eqnarray*}
The desired result now follows from the above equation, Definition \ref{Def-HJ-BA} and the following which follows from the definition of $H_{S}$:
\begin{eqnarray*}
- \frac{2(\alpha - \beta)}{\alpha}\frac{\lambda}{\tau}H_{S} &=& - 2(\alpha-\beta)\frac{\lambda}{\tau} \Delta u + \frac{2(\alpha - \beta)}{\alpha}\lambda {\beta}\frac{|\nabla u|^{2}}{\tau} - \frac{2(\alpha - \beta)}{\alpha}\lambda {a}\frac{S}{\tau} \\
&-& \frac{2(\alpha - \beta)}{\alpha}\lambda {b}\frac{u}{\tau^{2}} - \frac{2(\alpha - \beta)}{\alpha}\lambda \frac{dn}{\tau^{2}}.
\end{eqnarray*}
\end{proof}

\subsection{Case of $v = - \log f -w(\tau)$}

As in Subsection \ref{sub-u}, let $f$ be a positive solution of the following nonlinear backward heat equation with potential term $- cS$:
\begin{eqnarray*}
\frac{\partial f}{\partial t} = -{\Delta}_{}f + \gamma f\log f- cSf.
\end{eqnarray*}
In what follows, let
$$
v := - \log f -w(\tau),
$$
where $w(\tau)$ is any $C^{\infty}$ function on $\tau =T-t$.
Then we have the following:
\begin{thm}\label{T-2}
Let $g(t)$ be a solution to the geometric flow (\ref{GF}). Let
\begin{eqnarray*}
P_{S} = \alpha \Delta v - \beta |\nabla v|^{2} + a{S} + b\frac{v}{\tau}+d\frac{n}{\tau},
\end{eqnarray*}
where $\alpha, \beta, a, b$ and $d$ are constants,  $\alpha \not=0$ and $\alpha \not= \beta$. Then $P_{S}$ satisfies
\begin{eqnarray*}
\frac{\partial P_{S}}{\partial \tau} &=& \Delta{P_{S}} -2{\nabla}^{i}P_{S}{\nabla}_{i}v - 2(\alpha - \beta)\Big|\nabla_{i}\nabla_{j}v + \frac{\alpha}{2(\alpha-\beta)}S_{ij}-\frac{\lambda}{2\tau}g_{ij} \Big|^{2} \\
&+& 2(a+\beta c){\nabla}^{i}v{\nabla}_{i}S - \frac{2(\alpha -\beta)}{\alpha}\frac{\lambda}{\tau}P_{S} + (\alpha -\beta)\frac{n{\lambda}^{2}}{2\tau^{2}} \\
&+& \Big(b - \frac{2(\alpha-\beta)\lambda {\beta}}{\alpha} \Big)\frac{|\nabla v|^{2}}{\tau} + \Big(\frac{2(\alpha-\beta)}{\alpha}a \lambda - \alpha{\lambda} - bc \Big)\frac{S}{\tau} \\
&+& \Big(\frac{2(\alpha-\beta)\lambda}{\alpha}-1 \Big)\frac{b}{\tau^{2}}v + \Big(\frac{2(\alpha-\beta)\lambda}{\alpha}-1 \Big)\frac{d}{\tau^{2}}n - \alpha \gamma \Delta v + 2\beta \gamma |\nabla v|^{2} \\
&-& b\gamma\frac{1}{\tau}\Big(v + w(\tau) \Big) + {\mathcal E}_{(a,c, \alpha, \beta)}(S_{ij}, {\nabla}v) - \frac{b}{\tau}\frac{\partial w(\tau)}{\partial \tau},
\end{eqnarray*}
where $\lambda$ is a constant.
\end{thm}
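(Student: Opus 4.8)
The plan is to reduce everything to Theorem \ref{T-1} by exploiting the fact that $w(\tau)$ is spatially constant. Since $v=-\log f-w(\tau)$ differs from $u=-\log f$ by a function of $\tau$ alone, we have $\nabla v=\nabla u$, $\Delta v=\Delta u$ and $\nabla_i\nabla_j v=\nabla_i\nabla_j u$ pointwise in space-time, while $v=u-w(\tau)$. Comparing the two Harnack quantities term by term therefore gives the single clean relation
\[
P_{S}=H_{S}-\frac{b\,w(\tau)}{\tau},
\]
where $H_{S}$ is the quantity of Subsection \ref{sub-u} built from the \emph{same} solution $f$. Since $u=-\log f$ still satisfies (\ref{u-differ}), Theorem \ref{T-1} applies verbatim to $H_{S}$, and the whole problem becomes one of differentiating the correction term and rewriting the result in the $v$-variables. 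This is cleaner than rederiving the evolution of $\Delta v$ and $|\nabla v|^{2}$ from scratch, which would mean repeating the derivation of Proposition \ref{lem-key-1} for the modified equation satisfied by $v$, namely $\partial_\tau v=\Delta v-|\nabla v|^{2}-cS-\gamma(v+w)-\partial_\tau w$.

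First I would differentiate the relation in $\tau$, obtaining $\partial_\tau P_{S}=\partial_\tau H_{S}-b\,\partial_\tau w/\tau+b\,w/\tau^{2}$, and substitute the formula of Theorem \ref{T-1} for $\partial_\tau H_{S}$. Next I would translate every $u$-expression on the right-hand side into a $v$-expression. All purely spatial ingredients pass through unchanged: $\Delta H_{S}=\Delta P_{S}$, and $\nabla^{i}H_{S}\,\nabla_i u=\nabla^{i}P_{S}\,\nabla_i v$ because the difference $H_{S}-P_{S}$ is spatially constant, together with the squared Hessian term, $|\nabla u|^{2}=|\nabla v|^{2}$, $\nabla^{i}u\,\nabla_i S=\nabla^{i}v\,\nabla_i S$, and $\mathcal{E}_{(a,c,\alpha,\beta)}(S_{ij},\nabla u)=\mathcal{E}_{(a,c,\alpha,\beta)}(S_{ij},\nabla v)$. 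The only terms that generate new contributions are the three places where $u$ itself (not merely its derivatives) appears: the factor $H_{S}$ inside $-\tfrac{2(\alpha-\beta)}{\alpha}\tfrac{\lambda}{\tau}H_{S}$, the term $\bigl(\tfrac{2(\alpha-\beta)\lambda}{\alpha}-1\bigr)\tfrac{b}{\tau^{2}}u$, and the term $-b\gamma\tfrac{u}{\tau}$, which I would expand using $H_{S}=P_{S}+bw/\tau$ and $u=v+w$.

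The decisive bookkeeping step is to collect all the surviving $w$-terms. The $w/\tau^{2}$ contributions arise from three sources: $-\tfrac{2(\alpha-\beta)}{\alpha}\lambda\tfrac{bw}{\tau^{2}}$ from the $H_{S}$ factor, $\bigl(\tfrac{2(\alpha-\beta)\lambda}{\alpha}-1\bigr)\tfrac{bw}{\tau^{2}}$ from the $u$-term, and $+\tfrac{bw}{\tau^{2}}$ from differentiating the correction. I expect these to cancel identically, their bracketed coefficient summing to zero, and this is the one genuine identity to verify. What then remains is exactly $-b\gamma\tfrac{w}{\tau}$ together with $-\tfrac{b}{\tau}\partial_\tau w$; combining the former with the already-present $-b\gamma\tfrac{v}{\tau}$ reconstitutes the term $-b\gamma\tfrac{1}{\tau}\bigl(v+w(\tau)\bigr)$, and the latter supplies the final $-\tfrac{b}{\tau}\partial_\tau w$ of the statement.

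The main obstacle is not conceptual but organizational: one must keep scrupulous track of signs, in particular $\partial_\tau(bw/\tau)=b\,\partial_\tau w/\tau-bw/\tau^{2}$, and be confident that no $w$-dependence leaks into the spatial terms — which is guaranteed precisely because $w$ depends on $\tau$ only. As a sanity check I would confirm that setting $w\equiv 0$ returns the statement of Theorem \ref{T-1} verbatim, since then $P_{S}=H_{S}$ and $v=u$, and that the coefficients of $\tfrac{|\nabla v|^{2}}{\tau}$, $\tfrac{S}{\tau}$, $\tfrac{d}{\tau^{2}}n$ and the Hessian, gradient-of-$S$, $\mathcal{E}$ and curvature terms are untouched by the whole reduction.
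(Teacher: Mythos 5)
Your proposal is correct and follows exactly the paper's own argument: the paper likewise sets $P_{S}=H_{S}-\frac{b}{\tau}w(\tau)$, differentiates this relation, substitutes Theorem \ref{T-1}, replaces $u$ by $v+w(\tau)$ in the three non-derivative occurrences, and observes the cancellation of the $w/\tau^{2}$ terms. No difference in method or any gap to report.
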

\begin{proof}
A similar computation with Theorem \ref{T-1} enables us to prove this result. In fact, notice that we have $v = u - w(\tau)$. Therefore we get $\nabla u = \nabla v$ and $\Delta u = \Delta v$. We also have
\begin{eqnarray*}\label{PS-HS}
P_{S} =H_{S} - \frac{b}{\tau}w(\tau).
\end{eqnarray*}
Then Theorem \ref{T-1} and direct computations imply
\begin{eqnarray*}
\frac{\partial P_{S}}{\partial \tau} &=& \frac{\partial H_{S}}{\partial \tau} + \frac{b}{{\tau}^{2}}w(\tau) - \frac{b}{{\tau}}\frac{\partial w(\tau)}{\partial \tau} \\
&=& \Delta{H_{S}} -2{\nabla}^{i}H_{S}{\nabla}_{i}u - 2(\alpha - \beta)\Big|\nabla_{i}\nabla_{j}u + \frac{\alpha}{2(\alpha-\beta)}S_{ij}-\frac{\lambda}{2\tau}g_{ij} \Big|^{2} \\
&+& 2(a+\beta c){\nabla}^{i}u{\nabla}_{i}S - \frac{2(\alpha -\beta)}{\alpha}\frac{\lambda}{\tau}H_{S} + (\alpha -\beta)\frac{n{\lambda}^{2}}{2\tau^{2}} + \Big(b - \frac{2(\alpha-\beta)\lambda {\beta}}{\alpha} \Big)\frac{|\nabla u|^{2}}{\tau} \\
&+& \Big(\frac{2(\alpha-\beta)}{\alpha}a \lambda - \alpha{\lambda} - bc \Big)\frac{S}{\tau} + \Big(\frac{2(\alpha-\beta)\lambda}{\alpha}-1 \Big)\frac{b}{\tau^{2}}u + \Big(\frac{2(\alpha-\beta)\lambda}{\alpha}-1 \Big)\frac{d}{\tau^{2}}n \\
&-& \alpha \gamma \Delta u + 2\beta \gamma |\nabla u|^{2} -b\gamma\frac{u}{\tau} + {\mathcal E}_{(a,c, \alpha, \beta)}(S_{ij}, {\nabla}u) + \frac{b}{{\tau}^{2}}w(\tau) - \frac{b}{{\tau}}\frac{\partial w(\tau)}{\partial \tau}.
\end{eqnarray*}
Since we have $\nabla u = \nabla v$ and $\Delta u = \Delta v$, this implies
\begin{eqnarray*}
\frac{\partial P_{S}}{\partial \tau}
&=& \Delta{P_{S}} -2{\nabla}^{i}P_{S}{\nabla}_{i}v - 2(\alpha - \beta)\Big|\nabla_{i}\nabla_{j}v + \frac{\alpha}{2(\alpha-\beta)}S_{ij}-\frac{\lambda}{2\tau}g_{ij} \Big|^{2} \\
&+& 2(a+\beta c){\nabla}^{i}v{\nabla}_{i}S - \frac{2(\alpha -\beta)}{\alpha}\frac{\lambda}{\tau}\Big(P_{S} + \frac{b}{\tau}w(\tau) \Big) + (\alpha -\beta)\frac{n{\lambda}^{2}}{2\tau^{2}} \\
&+& \Big(b - \frac{2(\alpha-\beta)\lambda {\beta}}{\alpha} \Big)\frac{|\nabla v|^{2}}{\tau} + \Big(\frac{2(\alpha-\beta)}{\alpha}a \lambda - \alpha{\lambda} - bc \Big)\frac{S}{\tau} \\
&+& \Big(\frac{2(\alpha-\beta)\lambda}{\alpha}-1 \Big)\frac{b}{\tau^{2}}\Big(v + w(\tau) \Big) + \Big(\frac{2(\alpha-\beta)\lambda}{\alpha}-1 \Big)\frac{d}{\tau^{2}}n \\
&-& \alpha \gamma \Delta v + 2\beta \gamma |\nabla v|^{2} -b\gamma\frac{1}{\tau}\Big(v + w(\tau) \Big) + {\mathcal E}_{(a,c, \alpha, \beta)}(S_{ij}, {\nabla}v) + \frac{b}{{\tau}^{2}}w(\tau) - \frac{b}{{\tau}}\frac{\partial w(\tau)}{\partial \tau} \\
&=& \Delta{P_{S}} -2{\nabla}^{i}P_{S}{\nabla}_{i}v - 2(\alpha - \beta)\Big|\nabla_{i}\nabla_{j}v + \frac{\alpha}{2(\alpha-\beta)}S_{ij}-\frac{\lambda}{2\tau}g_{ij} \Big|^{2} \\
&+& 2(a+\beta c){\nabla}^{i}v{\nabla}_{i}S - \frac{2(\alpha -\beta)}{\alpha}\frac{\lambda}{\tau}P_{S} + (\alpha -\beta)\frac{n{\lambda}^{2}}{2\tau^{2}} \\
&+& \Big(b - \frac{2(\alpha-\beta)\lambda {\beta}}{\alpha} \Big)\frac{|\nabla v|^{2}}{\tau} + \Big(\frac{2(\alpha-\beta)}{\alpha}a \lambda - \alpha{\lambda} - bc \Big)\frac{S}{\tau} \\
&+& \Big(\frac{2(\alpha-\beta)\lambda}{\alpha}-1 \Big)\frac{b}{\tau^{2}}v + \Big(\frac{2(\alpha-\beta)\lambda}{\alpha}-1 \Big)\frac{d}{\tau^{2}}n - \alpha \gamma \Delta v + 2\beta \gamma |\nabla v|^{2} \\
&-& b\gamma\frac{1}{\tau}\Big(v +  w(\tau) \Big) + {\mathcal E}_{(a,c, \alpha, \beta)}(S_{ij}, {\nabla}v) - \frac{b}{\tau}\frac{\partial w(\tau)}{\partial \tau}.
\end{eqnarray*}
Hence we obtained the desired result.
\end{proof}

As s corollary of Theorem \ref{T-2}, we get
\begin{cor}\label{PS-cor}
Let $g(t)$ be a solution to the geometric flow (\ref{GF}) and $f$ be a positive solution of the following :
\begin{eqnarray*}
\frac{\partial f}{\partial t} = -{\Delta}_{}f + \gamma f\log f +aSf.
\end{eqnarray*}
Let $v = - \log f -w(\tau)$, $\tau = T-t$ and
\begin{eqnarray*}
P_{S}= 2\Delta v - |\nabla v|^{2} + a{S} + \frac{dn}{\tau}.
\end{eqnarray*}
Then, the following holds:
\begin{eqnarray*}
\frac{\partial P_{S}}{\partial \tau} &\leq& \Delta{P_{S}} -2{\nabla}^{i}P_{S}{\nabla}_{i}v - \Big(\frac{2}{\tau} + 2\gamma \Big)P_{S} - \frac{2}{n}(1-\gamma)\Big( \Delta v + S -\frac{n}{\tau} \Big)^{2} \\
&-& \frac{n}{2}\gamma \Big(\Delta v + S - \frac{n}{\tau}- \frac{n}{2} \Big)^{2} + 2S \Big(\frac{a-2}{\tau}+(a-1) \gamma \Big) + \frac{n}{\tau}\Big(\frac{1}{\tau}(d+2) \\
&+& 2\gamma (d+1) \Big) + \frac{n}{2}\gamma - 2\frac{|\nabla v|^{2}}{\tau} + {\mathcal E}_{(a, -a, 2, 1)}(S_{ij}, {\nabla}v).
\end{eqnarray*}
In particular, if $0 \leq \gamma \leq 1$, then
\begin{eqnarray*}
\frac{\partial P_{S}}{\partial \tau} &\leq& \Delta{P_{S}} -2{\nabla}^{i}P_{S}{\nabla}_{i}v - \Big(\frac{2}{\tau} + 2\gamma \Big)P_{S} + 2S \Big(\frac{a-2}{\tau}+(a-1) \gamma \Big) \\
&+& \frac{n}{\tau}\Big(\frac{1}{\tau}(d+2) + 2\gamma (d+1) \Big) + \frac{n}{2}\gamma - 2\frac{|\nabla v|^{2}}{\tau} + {\mathcal E}_{(a, -a, 2, 1)}(S_{ij}, {\nabla}v).
\end{eqnarray*}
\end{cor}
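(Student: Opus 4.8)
The plan is to deduce the corollary from the general identity of Theorem \ref{T-2} by fixing the free parameters and then converting the resulting equality into an inequality by two completions of squares and the trace estimate $|T_{ij}|^{2}\ge\frac{1}{n}(g^{ij}T_{ij})^{2}$. Matching the quantity $P_{S}=2\Delta v-|\nabla v|^{2}+aS+\frac{dn}{\tau}$ with the $P_{S}$ of Theorem \ref{T-2} forces $\alpha=2$, $\beta=1$, $b=0$ (leaving $a$ and $d$ as given), while the equation $\partial_{t}f=-\Delta f+\gamma f\log f+aSf$ is the equation of Theorem \ref{T-2} with $c=-a$, so the error term is exactly ${\mathcal E}_{(a,-a,2,1)}(S_{ij},\nabla v)$. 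Substituting these values and the choice $\lambda=2$, several simplifications occur at once: since $\alpha-\beta=1$ the Hessian term is $-2|\nabla_{i}\nabla_{j}v+S_{ij}-\frac{1}{\tau}g_{ij}|^{2}$; since $a+\beta c=a-a=0$ the gradient term $2(a+\beta c)\nabla^{i}v\,\nabla_{i}S$ drops out; every term carrying $b$ vanishes (so the auxiliary function $w$ plays no role here); and with $\lambda=2$ the coefficient of $P_{S}$ becomes $-\frac{2}{\tau}$ while that of $\frac{|\nabla v|^{2}}{\tau}$ becomes $-2$, the rest reducing to multiples of $\frac{S}{\tau}$ and $\frac{n}{\tau^{2}}$.

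Next I would absorb the nonlinear contribution $-2\gamma\Delta v+2\gamma|\nabla v|^{2}$. Using $2\Delta v-|\nabla v|^{2}=P_{S}-aS-\frac{dn}{\tau}$ rewrites it as $-2\gamma P_{S}+2\gamma\Delta v+2\gamma aS+2\gamma\frac{dn}{\tau}$, and combining $-2\gamma P_{S}$ with the $-\frac{2}{\tau}P_{S}$ already present yields the claimed factor $-(\frac{2}{\tau}+2\gamma)P_{S}$. To create the two squared quantities I split the Hessian coefficient as $-2=-2(1-\gamma)-2\gamma$. On the $(1-\gamma)$-part I apply the trace estimate directly to $T_{ij}=\nabla_{i}\nabla_{j}v+S_{ij}-\frac{1}{\tau}g_{ij}$, whose trace is $\Delta v+S-\frac{n}{\tau}$, producing the first square $-\frac{2}{n}(1-\gamma)(\Delta v+S-\frac{n}{\tau})^{2}$. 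On the $\gamma$-part I first use the leftover $+2\gamma\Delta v$ to complete a square at the tensor level, $-2\gamma|T_{ij}|^{2}+2\gamma g^{ij}T_{ij}=-2\gamma|T_{ij}-\frac{1}{2}g_{ij}|^{2}+\frac{n}{2}\gamma$, which shifts the effective trace to $\Delta v+S-\frac{n}{\tau}-\frac{n}{2}$ and releases the standalone constant $\frac{n}{2}\gamma$; applying the trace estimate to $T_{ij}-\frac{1}{2}g_{ij}$ then gives the second squared quantity with the coefficient recorded in the statement.

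It remains to collect the lower-order terms. The linear-in-$S$ terms assemble from $\frac{2(a-2)}{\tau}S$ in the explicit part, $+2\gamma aS$ from the rewriting of the nonlinear contribution, and $-2\gamma S$ from the tensor completion, giving $2S(\frac{a-2}{\tau}+(a-1)\gamma)$; the $\frac{n}{\tau^{2}}$- and $\frac{n}{\tau}$-terms similarly combine into $\frac{n}{\tau}(\frac{1}{\tau}(d+2)+2\gamma(d+1))$, with the constant $\frac{n}{2}\gamma$ left over, which is exactly the first asserted inequality. The second displayed inequality then follows immediately: for $0\le\gamma\le1$ both $1-\gamma\ge0$ and $\gamma\ge0$, so the two squared quantities enter with nonpositive coefficients and may be discarded. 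I expect the only genuine difficulty to be organizational rather than conceptual, namely to keep faithful track, through the split, the tensor completion and the trace estimate, of how the center of the second square is displaced from $\frac{n}{\tau}$ to $\frac{n}{\tau}+\frac{n}{2}$ and of how the freed $S$, $\frac{n}{\tau}$ and constant pieces redistribute, so that every coefficient in the stated inequality is reproduced exactly.
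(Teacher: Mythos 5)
Your proposal is correct and follows essentially the same route as the paper's proof: specialize Theorem \ref{T-2} to $\alpha=2$, $\beta=1$, $b=0$, $c=-a$, $\lambda=2$, absorb $-2\gamma(\Delta v-|\nabla v|^{2})$ into $-2\gamma P_{S}$ plus lower-order terms, and generate the two squares from the Hessian term via the trace estimate; all of your coefficient bookkeeping checks out. Two small remarks. First, the order of operations differs in a way that affects the stated generality: the paper applies $|T_{ij}|^{2}\ge\frac{1}{n}(g^{ij}T_{ij})^{2}$ once, to the full term $-2|T_{ij}|^{2}$, and then uses the purely algebraic identity $2\gamma X=-\frac{2}{n}\gamma\left(X-\frac{n}{2}\right)^{2}+\frac{2}{n}\gamma X^{2}+\frac{n}{2}\gamma$, so its first displayed inequality holds for every $\gamma$; your version splits $-2|T_{ij}|^{2}=-2(1-\gamma)|T_{ij}|^{2}-2\gamma|T_{ij}|^{2}$ and applies the trace estimate to each piece separately, which is legitimate only when both coefficients are nonpositive, i.e.\ $0\le\gamma\le1$. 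Thus as written you obtain the first inequality only under that restriction --- harmless for every application in the paper (which uses $\gamma\in\{0,1\}$ and the second inequality), but a loss of generality relative to the literal statement. Second, both your computation and the paper's actually produce the coefficient $\frac{2}{n}\gamma$ on the second square, not the $\frac{n}{2}\gamma$ printed in the corollary; that is evidently a typo in the statement, so your claim to recover ``the coefficient recorded in the statement'' should be read with that correction.
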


\begin{proof}
By Theorem \ref{T-2} in the case where $\alpha = 2$, $\beta = 1$, $\lambda = 2$, $a =- c$, $\lambda = 2$, $b = 0$, we obtain
\begin{eqnarray*}
\frac{\partial P_{S}}{\partial \tau} &=& \Delta{P_{S}} -2{\nabla}^{i}P_{S}{\nabla}_{i}v - 2\Big|\nabla_{i}\nabla_{j}v + S_{ij}-\frac{1}{\tau}g_{ij} \Big|^{2} - \frac{2}{\tau}P_{S} - \frac{2}{\tau}{|\nabla v|^{2}} \\
&+& 2(a-2) \frac{S}{\tau}- 2\gamma(\Delta v - |\nabla v|^{2}) + \frac{n}{\tau^{2}}(d+2) + {\mathcal E}_{(a, -a, 2,1)}(S_{ij}, {\nabla}v).
\end{eqnarray*}
By the definition of $P_S$, we have
\begin{eqnarray*}
-2\gamma(\Delta v - |\nabla v|^{2}) = -2\gamma{P}_{S} + 2 \gamma\Big(\Delta v + S - \frac{n}{\tau} \Big) + 2\gamma(a-1)S  + \frac{2n}{\tau}(d+1)\gamma.
\end{eqnarray*}
On the other hand, we also have
\begin{eqnarray*}
\Big|\nabla_{i}\nabla_{j}v + S_{ij}-\frac{1}{\tau}g_{ij} \Big|^{2} \geq \frac{1}{n} \Big(\Delta v +S - \frac{n}{\tau} \Big)^{2}.
\end{eqnarray*}
Therefore, we obain
\begin{eqnarray*}
\frac{\partial P_{S}}{\partial \tau} &\leq& \Delta{P_{S}} -2{\nabla}^{i}P_{S}{\nabla}_{i}v - \Big(\frac{2}{\tau} + 2\gamma \Big) P_{S}  - \frac{2}{\tau}{|\nabla v|^{2}} + {\mathcal E}_{(a, -a, 2,1)}(S_{ij}, {\nabla}v) \\
&+&  2(a-2) \frac{S}{\tau} - \frac{2}{n} \Big(\Delta v + S - \frac{n}{\tau} \Big)^{2} + 2\gamma \Big(\Delta v + S - \frac{n}{\tau} \Big) +2\gamma \Big(a-1 \Big)S \\
&+& \frac{n}{\tau}\Big(\frac{1}{\tau}(d+2) + 2\gamma(d+1) \Big)
\end{eqnarray*}
On the other hand, we get the following by a direct computation.
\begin{eqnarray*}
2\gamma \Big(\Delta v + S - \frac{n}{\tau} \Big) = -\frac{2}{n}\gamma \Big(\Delta v +S - \frac{n}{\tau}-\frac{n}{2} \Big)^{2} + \frac{2}{n} \gamma\Big(\Delta v + S - \frac{n}{\tau} \Big)^{2} + \frac{2}{n} \gamma.
\end{eqnarray*}
By using this, we get the desired result.
\end{proof}


\section{Proof of Theorem \ref{main-A}} \label{POA}

In this section, we shall prove Theorem \ref{main-A}.

\subsection{The first case}

Suppose that $d \leq -2$ and $0 \leq \gamma \leq 1$. We have the following by Corollary \ref{PS-cor}:
\begin{eqnarray*}
\frac{\partial P_{S}}{\partial \tau} &\leq& \Delta{P_{S}} -2{\nabla}^{i}P_{S}{\nabla}_{i}v  - (\frac{2}{\tau} + 2\gamma)P_{S} + \frac{n}{2}\gamma+ 2S \Big(\frac{a-2}{\tau} + \gamma(a-1) \Big) + {\mathcal E}_{a}(S_{ij}, {\nabla}v),
\end{eqnarray*}
where notice that ${\mathcal E}_{(a, -a, 2,1)}(S_{ij}, {\nabla}v) = {\mathcal E}_{a}(S_{ij}, {\nabla}v)$ holds.
Since we assumed $a=2$, this implies
\begin{eqnarray*}
\frac{\partial P_{S}}{\partial \tau} &\leq& \Delta{P_{S}} -2{\nabla}^{i}P_{S}{\nabla}_{i}v  - (\frac{2}{\tau} + 2\gamma)P_{S} + \frac{n}{2}\gamma+ 2S\gamma + {\mathcal E}_{2}(S_{ij}, {\nabla}v).
\end{eqnarray*}
On the other hand, a direct computation tells us that
$$
2S\gamma = -\frac{2}{n} \gamma \Big(S -\frac{n}{2} \Big)^{2} + \frac{2}{n}\gamma S^{2} + \frac{n}{2}\gamma.
$$
Hence, we get
\begin{eqnarray*}
\frac{\partial P_{S}}{\partial \tau} &\leq& \Delta{P_{S}} -2{\nabla}^{i}P_{S}{\nabla}_{i}v  - (\frac{2}{\tau} + 2\gamma)P_{S} + n\gamma  -\frac{2}{n} \gamma \Big(S -\frac{n}{2} \Big)^{2} + \frac{2}{n}\gamma S^{2} + {\mathcal E}_{2}(S, {\nabla}v) \\
&\leq& \Delta{P_{S}} -2{\nabla}^{i}P_{S}{\nabla}_{i}v  - (\frac{2}{\tau} + 2\gamma)P_{S} + n\gamma + {\mathcal E}_{2}(S_{ij}, {\nabla}v) + \frac{2}{n}\gamma S^{2}.
\end{eqnarray*}
On the other hand, we also have
$$
- (\frac{2}{\tau} + 2\gamma)P_{S}= - (\frac{2}{\tau} + 2\gamma)\Big(P_{S}-\frac{n}{2}\gamma \Big) - \frac{n \gamma}{\tau}- {n}{\gamma}^{2}.
$$
Adding $-\frac{n}{2}\gamma$ to $P_{S}$, we get
\begin{eqnarray*}
\frac{\partial }{\partial \tau}\Big(P_{S}-\frac{n}{2}\gamma \Big)  &\leq& \Delta{\Big( P_{S}-\frac{n}{2}\gamma \Big)} -2{\nabla}^{i}\Big(P_{S}-\frac{n}{2}\gamma \Big){\nabla}_{i}v - \Big( \frac{2}{\tau} + 2\gamma \Big) \Big( P_{S}-\frac{n}{2}\gamma \Big) - \frac{n}{\tau}\gamma \\
&-& n(\gamma^{2} - \gamma) + {\mathcal E}_{2}(S_{ij}, {\nabla}v) + \frac{2}{n}\gamma S^{2} \\
&\leq& \Delta{\Big( P_{S}-\frac{n}{2}\gamma \Big)} -2{\nabla}^{i}\Big(P_{S}-\frac{n}{2}\gamma \Big){\nabla}_{i}v - \Big( \frac{2}{\tau} + 2\gamma \Big) \Big( P_{S}-\frac{n}{2}\gamma \Big) \\
&-& n\gamma(\gamma - 1) + {\mathcal E}_{2}(S_{ij}, {\nabla}v) + \frac{2}{n}\gamma S^{2}.
\end{eqnarray*}
Finally, by taking $\gamma = 1$ and using (\ref{key-cond-2}), we obtain
\begin{eqnarray*}
\frac{\partial }{\partial \tau}\Big(P_{S}-\frac{n}{2} \Big)
&\leq& \Delta{\Big( P_{S}-\frac{n}{2} \Big)} -2{\nabla}^{i}\Big(P_{S}-\frac{n}{2} \Big){\nabla}_{i}v - \Big( \frac{2}{\tau} + 2 \Big) \Big( P_{S}-\frac{n}{2}\Big).
\end{eqnarray*}
Since
$$
P_{S}-\frac{n}{2} < 0
$$
holds for $\tau$ small enough which depends on $d$, the maximal principle implies the desired result.

\subsection{The second case}

Suppose that $d \leq -2$ and $0 \leq \gamma \leq 1$. As in the first case, the following holds:
\begin{eqnarray*}
\frac{\partial P_{S}}{\partial \tau} &\leq& \Delta{P_{S}} -2{\nabla}^{i}P_{S}{\nabla}_{i}v  - (\frac{2}{\tau} + 2\gamma)P_{S} + \frac{n}{2}\gamma+ 2S \Big(\frac{a-2}{\tau} + \gamma(a-1) \Big) + {\mathcal E}_{a}(S_{ij}, {\nabla}v).
\end{eqnarray*}
Since we assumed $a=1$, we have
\begin{eqnarray*}
\frac{\partial P_{S}}{\partial \tau} &\leq& \Delta{P_{S}} -2{\nabla}^{i}P_{S}{\nabla}_{i}v  - (\frac{2}{\tau} + 2\gamma )P_{S} + \frac{n}{2}\gamma - \frac{2S}{\tau} + {\mathcal E}_{1}(S_{ij}, {\nabla}v).
\end{eqnarray*}
Assume (\ref{key-cond-3}) holds. Then this imples
\begin{eqnarray*}
\frac{\partial P_{S}}{\partial \tau} &\leq& \Delta{P_{S}} -2{\nabla}^{i}P_{S}{\nabla}_{i}v  - (\frac{2}{\tau} + 2\gamma)P_{S} + \frac{n}{2}\gamma.
\end{eqnarray*}
By adding $-\frac{n}{4}\gamma$ to $P_{S}$, we get
\begin{eqnarray*}
\frac{\partial }{\partial \tau}\Big( P_{S}-\frac{n}{4}\gamma \Big) &\leq& \Delta\Big({P_{S}}-\frac{n}{4}\gamma  \Big) -2{\nabla}^{i}\Big({P_{S}}-\frac{n}{4}\gamma  \Big){\nabla}_{i}v - \Big(\frac{2}{\tau} + 2\gamma  \Big)\Big( P_{S} -\frac{n}{4}\gamma \Big) - \frac{n\gamma }{2\tau} \\
&-& \frac{n}{2}({\gamma}^{2} - \gamma) \\
&\leq& \Delta\Big({P_{S}}-\frac{n}{4}\gamma  \Big) -2{\nabla}^{i}\Big({P_{S}}-\frac{n}{4}\gamma  \Big){\nabla}_{i}v - \Big(\frac{2}{\tau} + 2\gamma  \Big)\Big( P_{S} -\frac{n}{4}\gamma \Big) \\
&-& \frac{n}{2}\gamma({\gamma} - 1)
\end{eqnarray*}
Finally, by taking $\gamma = 1$,
\begin{eqnarray*}
\frac{\partial }{\partial \tau}\Big( P_{S}-\frac{n}{4} \Big) &\leq& \Delta\Big({P_{S}}-\frac{n}{4} \Big) -2{\nabla}^{i}\Big({P_{S}}-\frac{n}{4} \Big){\nabla}_{i}v - \Big(\frac{2}{\tau} + 2 \Big)\Big( P_{S} -\frac{n}{4}\Big).
\end{eqnarray*}
Notice that
$$
P_{S}-\frac{n}{4} < 0
$$
holds for $\tau$ small enough which depends on $d$. By using the maximal principle, we get the desired result.

\section{Proof of Theorem \ref{main-Aa}}\label{thm-Aa}

Suppose that $0 \leq \gamma \leq 1$. We get the following by Corollary \ref{PS-cor}:
\begin{eqnarray*}
\frac{\partial P_{S}}{\partial \tau} &\leq& \Delta{P_{S}} -2{\nabla}^{i}P_{S}{\nabla}_{i}v  - (\frac{2}{\tau} + 2\gamma)P_{S} + \frac{n}{2}\gamma + 2S \Big(\frac{a-2}{\tau} + (a-1)\gamma \Big) \\
&+& \frac{n}{\tau}\Big(\frac{1}{\tau}(d+2) + 2\gamma(d+1) \Big) + {\mathcal E}_{a}(S_{ij}, {\nabla}v).
\end{eqnarray*}
By taking  $a=1$ and $d = -3$, we get
\begin{eqnarray*}
\frac{\partial P_{S}}{\partial \tau}
&\leq & \Delta{P_{S}} -2{\nabla}^{i}P_{S}{\nabla}_{i}v - \Big(\frac{2}{\tau} + 2\gamma \Big)P_{S} - \frac{2}{\tau}\Big( S + \frac{n}{2\tau} \Big) - \frac{4n}{\tau}\gamma + \frac{n}{2} \gamma+ {\mathcal E}_{1}(S, {\nabla}v) \\
&\leq & \Delta{P_{S}} -2{\nabla}^{i}P_{S}{\nabla}_{i}v - \Big(\frac{2}{\tau} + 2\gamma \Big)P_{S} - \frac{2}{\tau}\Big( S + \frac{n}{2\tau} \Big) + \frac{n}{2} \gamma+ {\mathcal E}_{1}(S_{ij}, {\nabla}v)
\end{eqnarray*}
Asuume that ${\mathcal E}_{1}(S_{ij}, {\nabla}v) \leq 0$ holds. Then we obtain
\begin{eqnarray*}
\frac{\partial P_{S}}{\partial \tau} &\leq & \Delta{P_{S}} -2{\nabla}^{i}P_{S}{\nabla}_{i}v - \Big(\frac{2}{\tau} + 2\gamma \Big)P_{S} - \frac{2}{\tau}\Big( S + \frac{n}{2\tau} \Big) + \frac{n}{2}\gamma.
\end{eqnarray*}
On the other hand, assume that $S \geq -\frac{n}{2t}$ holds for all time $t \in [\frac{T}{2}, T)$. Since we have $\frac{1}{\tau} \geq \frac{1}{t}$, we get
$$
S \geq -\frac{n}{2t} \geq  -\frac{n}{2\tau}.
$$
Therefore, we obtain
\begin{eqnarray*}
\frac{\partial P_{S}}{\partial \tau} &\leq& \Delta{P_{S}} -2{\nabla}^{i}P_{S}{\nabla}_{i}v - \Big(\frac{2}{\tau} + 2\gamma \Big)P_{S}+ \frac{n}{2}\gamma.
\end{eqnarray*}
By adding $-\frac{n}{4}\gamma$ to $P_{S}$, we have
\begin{eqnarray*}
\frac{\partial }{\partial \tau}\Big(P_{S} -\frac{n}{4}\gamma \Big) &\leq& \Delta \Big(P_{S} -\frac{n}{4}\gamma \Big)  -2{\nabla}^{i}\Big(P_{S} -\frac{n}{4}\gamma \Big) {\nabla}_{i}v - \Big(\frac{2}{\tau} + 2\gamma \Big)\Big(P_{S} -\frac{n}{4}\gamma \Big) - \frac{n\gamma}{2\tau} \\
&-& \frac{n}{2}(\gamma^{2} - \gamma) \\
&\leq& \Delta \Big(P_{S} -\frac{n}{4} \Big)  -2{\nabla}^{i}\Big(P_{S} -\frac{n}{4} \Big) {\nabla}_{i}v - \Big(\frac{2}{\tau} + 2 \Big)\Big(P_{S} -\frac{n}{4} \Big)-\frac{n}{2}\gamma(\gamma - 1).
\end{eqnarray*}
Finally, by taking $\gamma = 1$, we obtain
\begin{eqnarray*}
\frac{\partial }{\partial \tau}\Big( P_{S}-\frac{n}{4} \Big) &\leq& \Delta\Big({P_{S}}-\frac{n}{4} \Big) -2{\nabla}^{i}\Big({P_{S}}-\frac{n}{4} \Big){\nabla}_{i}v - \Big(\frac{2}{\tau} + 2 \Big)\Big( P_{S} -\frac{n}{4}\Big).
\end{eqnarray*}
Notice that
$$
P_{S}-\frac{n}{4} < 0
$$
holds for $\tau$ small enough. By using the maximal principle, we get Theorem \ref{main-Aa}. \par


\section{Proof of Theorem \ref{main-B}}\label{sec-B}

By Proposition \ref{lem-key-1} in the case where $\alpha = 0$, $\beta = -1$, $a = 0$, $c=0$, $b = -1$ and $d= 0$, we get
$$
H_{S} = |\nabla u|^{2} - \frac{u}{\tau}
$$
and
\begin{eqnarray*}
\frac{\partial H_{S}}{\partial \tau} &=& \Delta{H_{S}} -2{\nabla}^{i}H_{S}{\nabla}_{i}u - \frac{1}{\tau}H_{S} - 2|\nabla \nabla u|^{2} - 2\Big(R^{ij} + S^{ij} \Big){\nabla}_{i}u{\nabla}_{j}u \\
&-&2\gamma|\nabla u|^{2} + \frac{u}{\tau}\gamma \\
&=& \Delta{H_{S}} -2{\nabla}^{i}H_{S}{\nabla}_{i}u -\Big(\frac{1}{\tau} + \gamma \Big)H_{S} - 2|\nabla \nabla u|^{2} - 2\Big(R^{ij} + S^{ij} \Big){\nabla}_{i}u{\nabla}_{j}u \\
&-& \gamma|\nabla u|^{2} \\
&\leq& \Delta{H_{S}} -2{\nabla}^{i}H_{S}{\nabla}_{i}u -\Big(\frac{1}{\tau} + \gamma \Big)H_{S} - 2\Big(R^{ij} + S^{ij} \Big){\nabla}_{i}u{\nabla}_{j}u - \gamma|\nabla u|^{2}.
\end{eqnarray*}
Assume that $R_{ij}(t) + S_{ij}(t) \geq -Ag_{ij}$, where $0 \leq A \leq \frac{1}{2}\gamma$. Then we have
\begin{eqnarray*}
- 2\Big(R^{ij} + S^{ij} \Big){\nabla}_{i}u{\nabla}_{j}u - \gamma|\nabla u|^{2} \leq (2A -\gamma) |\nabla u|^{2} \leq 0.
\end{eqnarray*}
Therefore we obtain
\begin{eqnarray*}
\frac{\partial H_{S}}{\partial \tau}
&\leq& \Delta{H_{S}} -2{\nabla}^{i}H_{S}{\nabla}_{i}u -\Big(\frac{1}{\tau} + \gamma \Big)H_{S}.
\end{eqnarray*}
If $\tau$ is small, then $H_{S} < 0$. By applying the maximal principle to the above inequality, Theorem \ref{main-B} follows. \\
On the other hand, it is also natural to consider the following equation:
\begin{eqnarray*}
\frac{\partial f}{\partial \tau} = {\Delta}_{}f - \gamma(\tau) f\log f -aSf,
\end{eqnarray*}
where $\gamma(\tau) $ is any $C^{\infty}$ function of $\tau - T-t$.
Let $u=-\log f$. Then $u$ satisfies
\begin{eqnarray}\label{u-differ-C}
\frac{\partial u}{\partial \tau} = \Delta u - |\nabla u|^{2} +aS - \gamma(\tau) u.
\end{eqnarray}
Then, computations which are similar to Proposition \ref{lem-key-1} tell us that the following holds:
\begin{prop}\label{lem-key-1C}
Let $g(t)$ be a solution to the geometric flow (\ref{GF}) and $u$ satisfies (\ref{u-differ-C}). Let
\begin{eqnarray*}
H_{S} = \alpha \Delta u - \beta |\nabla u|^{2} + a{S} + b\frac{u}{\tau} + d\frac{n}{\tau},
\end{eqnarray*}
where $\alpha, \beta, a, b$ and $d$ are constants. Then the following holds:
\begin{eqnarray*}
\frac{\partial H_{S}}{\partial \tau}
&=& {\Delta}H_{S} - 2{\nabla}^{i}H_{S}{\nabla}_{i}u + 2(a+\beta c){\nabla}^{i}S{\nabla}_{i}u-2(\alpha-\beta)|\nabla \nabla u|^{2} \\
&-&2{\alpha}S^{ij}{\nabla}_{i}{\nabla}_{j}u  + \frac{b}{\tau}|\nabla u|^{2} - \frac{b}{\tau}cS - \frac{b}{\tau^{2}}u - d\frac{n}{\tau^{2}} \\
&+& a\frac{\partial S}{\partial \tau} - \Big(a - \alpha a \Big) \Delta S - \alpha \Big(2{\nabla}^{i}S_{i\ell} - {\nabla}_{\ell}S \Big){\nabla}^{\ell}u \\
&-& 2\Big(\alpha R^{ij} - \beta(R^{ij} + S^{ij})\Big){\nabla}_{i}u{\nabla}_{j}u - \alpha \gamma(\tau) \Delta u + 2\beta \gamma(\tau) |\nabla u|^{2} -b\gamma(\tau)\frac{u}{\tau}
\end{eqnarray*}
holds.
\end{prop}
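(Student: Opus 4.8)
The plan is to reprise, essentially verbatim, the derivation of Proposition \ref{lem-key-1}, since the only structural change is that the constant $\gamma$ is promoted to a function $\gamma(\tau)$ of time alone and the potential coefficient is fixed to $c=-a$ (so that the $S$-coefficient in the equation coincides with the $aS$ term appearing in $H_S$). The geometric flow (\ref{GF}) is unchanged, so every ingredient that depends on the metric evolution rather than on $f$ carries over without modification.

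First I would record the analogue of Lemma \ref{lem-1-com} for the new evolution (\ref{u-differ-C}). The variation of the Christoffel symbols and of the inverse metric under (\ref{GF}) produces exactly the same terms as in the proof of Lemma \ref{lem-1-com}, namely $g^{ij}\frac{\partial}{\partial t}\Gamma^{k}_{ij} = -g^{k\ell}(2\nabla^{i}S_{i\ell}-\nabla_{\ell}S)$ and the $-2S^{ij}\nabla_{i}\nabla_{j}u$ contribution coming from the inverse metric; only the substitution of $\frac{\partial u}{\partial t}$ changes. From (\ref{u-differ-C}) one has $\frac{\partial u}{\partial t} = -\Delta u + |\nabla u|^{2} - aS + \gamma(\tau)u$, which is precisely the right-hand side of (\ref{u-differ-1}) with $c$ replaced by $-a$ and $\gamma$ by $\gamma(\tau)$. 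Inserting this into the computations of $\frac{\partial}{\partial\tau}(\Delta u)$ and $\frac{\partial}{\partial\tau}(|\nabla u|^{2})$ reproduces the formulas of Lemma \ref{lem-1-com} with the replacements $-c\Delta S\mapsto a\Delta S$ and $-2c\nabla^{i}S\nabla_{i}u\mapsto 2a\nabla^{i}S\nabla_{i}u$, and $\gamma\mapsto\gamma(\tau)$ throughout. This already explains why the $\Delta S$ coefficient collapses to $-(a-\alpha a)$ in the statement.

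The decisive observation, and the only point that needs care, is that promoting $\gamma$ to $\gamma(\tau)$ introduces no new term. Since $\gamma(\tau)$ is constant in the spatial variables, it passes through $\nabla$ and $\Delta$ as a scalar, so the spatial identities used in the assembly, in particular (\ref{r-formula}), are untouched. Moreover $H_{S}=\alpha\Delta u-\beta|\nabla u|^{2}+aS+b\frac{u}{\tau}+d\frac{n}{\tau}$ carries no explicit $\gamma$, so differentiating $H_{S}$ in $\tau$ never produces a $\gamma'(\tau)$ contribution; the factor $\gamma(\tau)$ enters $\frac{\partial H_{S}}{\partial\tau}$ only algebraically, through $\frac{b}{\tau}\frac{\partial u}{\partial\tau}$ and through $\alpha\frac{\partial}{\partial\tau}(\Delta u)-\beta\frac{\partial}{\partial\tau}(|\nabla u|^{2})$. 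This is exactly what allows the time dependence of $\gamma$ to be carried along as an inert coefficient, giving the $-b\gamma(\tau)\frac{u}{\tau}$, $-\alpha\gamma(\tau)\Delta u$ and $2\beta\gamma(\tau)|\nabla u|^{2}$ terms in the conclusion.

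Finally I would assemble $\frac{\partial H_{S}}{\partial\tau}$ by combining the two evolution formulas with $a\frac{\partial S}{\partial\tau}$, with $\frac{b}{\tau}\frac{\partial u}{\partial\tau}-\frac{b}{\tau^{2}}u$ and with $-\frac{dn}{\tau^{2}}$, and then rewrite the $-\alpha\Delta(|\nabla u|^{2})$ term using (\ref{r-formula}) together with the expansion of $\nabla^{i}H_{S}$, exactly as in the proof of Proposition \ref{lem-key-1}. I expect no genuine analytic obstacle: the entire difficulty is bookkeeping, namely tracking the sign changes forced by $c=-a$ and confirming that $\gamma(\tau)$ is never differentiated, so that the stated identity follows by a computation formally identical to that of Proposition \ref{lem-key-1}.
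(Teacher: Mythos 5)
Your proposal is correct and follows exactly the route the paper intends: the paper itself gives no separate argument for Proposition~\ref{lem-key-1C}, merely asserting that it follows by computations similar to Proposition~\ref{lem-key-1}, and you carry these out, correctly identifying the two substitutions $c\mapsto -a$ and $\gamma\mapsto\gamma(\tau)$ and the key point that $\gamma(\tau)$ is never differentiated (spatially because it depends only on $\tau$, temporally because $H_S$ contains no explicit $\gamma$), so no $\gamma'(\tau)$ term can appear. Your remark about the $\Delta S$ coefficient collapsing to $-(a-\alpha a)$ also correctly accounts for the one place where the paper's statement explicitly performs the substitution $c=-a$ while leaving $c$ symbolic elsewhere.
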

By using this, we are able to prove a generalization of Theorem \ref{main-B} as follows:
\begin{thm}\label{main-Bbb}
Suppose that $g(t)$, $t \in [0, T)$, evolves by the geometric flow (\ref{GF}) on a closed oriented smooth $n$-manifold $M$ and Let $0<f<1$ be a positive solution to
\begin{eqnarray*}
\frac{\partial f}{\partial \tau} = {\Delta}_{}f -\gamma(\tau) f\log f,
\end{eqnarray*}
where $\gamma(\tau) > 0$ is any $C^{\infty}$ function on $\tau=T-t$. Suppose that there is a constant $A$ such that
\begin{eqnarray*}
R_{ij}(t) + S_{ij}(t) \geq -Ag_{ij}(t),
\end{eqnarray*}
where $0 \leq A \leq \frac{1}{2}\gamma (\tau)$ and Let $u = -\log f$. Then for all time $t \in [0, T)$,
\begin{eqnarray*}
|\nabla u|^{2} - \frac{u}{\tau} \leq 0
\end{eqnarray*}
holds.
\end{thm}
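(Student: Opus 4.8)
The plan is to follow the proof of Theorem \ref{main-B} essentially verbatim, replacing the constant $\gamma$ by the function $\gamma(\tau)$ and using Proposition \ref{lem-key-1C} in place of Proposition \ref{lem-key-1}. First I would specialize Proposition \ref{lem-key-1C} to the parameter values $\alpha = 0$, $\beta = -1$, $a = 0$, $c = 0$, $b = -1$, $d = 0$. For these values the Harnack quantity becomes $H_{S} = |\nabla u|^{2} - \frac{u}{\tau}$, which is precisely the expression appearing in the conclusion. After substituting and discarding every term carrying a factor $\alpha = 0$ or $a = 0$, the evolution equation collapses to
\begin{eqnarray*}
\frac{\partial H_{S}}{\partial \tau} &=& \Delta{H_{S}} - 2{\nabla}^{i}H_{S}{\nabla}_{i}u - 2|\nabla \nabla u|^{2} - \frac{1}{\tau}|\nabla u|^{2} + \frac{u}{\tau^{2}} \\
&-& 2\Big(R^{ij} + S^{ij} \Big){\nabla}_{i}u{\nabla}_{j}u - 2\gamma(\tau)|\nabla u|^{2} + \gamma(\tau)\frac{u}{\tau}.
\end{eqnarray*}

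Next I would regroup the lower-order terms using the definition of $H_{S}$. Since $-\frac{1}{\tau}|\nabla u|^{2} + \frac{u}{\tau^{2}} = -\frac{1}{\tau}H_{S}$ and $-2\gamma(\tau)|\nabla u|^{2} + \gamma(\tau)\frac{u}{\tau} = -\gamma(\tau)H_{S} - \gamma(\tau)|\nabla u|^{2}$, the evolution equation takes the form
\begin{eqnarray*}
\frac{\partial H_{S}}{\partial \tau} &=& \Delta{H_{S}} - 2{\nabla}^{i}H_{S}{\nabla}_{i}u - \Big(\frac{1}{\tau} + \gamma(\tau) \Big)H_{S} - 2|\nabla \nabla u|^{2} \\
&-& 2\Big(R^{ij} + S^{ij} \Big){\nabla}_{i}u{\nabla}_{j}u - \gamma(\tau)|\nabla u|^{2},
\end{eqnarray*}
which is the exact analogue of the displayed identity in Section \ref{sec-B}. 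I would then discard the nonpositive term $-2|\nabla \nabla u|^{2}$ and invoke the hypothesis $R_{ij} + S_{ij} \geq -Ag_{ij}$ to bound $-2(R^{ij}+S^{ij}){\nabla}_{i}u{\nabla}_{j}u \leq 2A|\nabla u|^{2}$. Because $0 \leq A \leq \frac{1}{2}\gamma(\tau)$, the combination $2A - \gamma(\tau) \leq 0$, so the curvature and gradient terms together are $\leq (2A - \gamma(\tau))|\nabla u|^{2} \leq 0$, leaving the differential inequality
\begin{eqnarray*}
\frac{\partial H_{S}}{\partial \tau} \leq \Delta{H_{S}} - 2{\nabla}^{i}H_{S}{\nabla}_{i}u - \Big(\frac{1}{\tau} + \gamma(\tau) \Big)H_{S}.
\end{eqnarray*}

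Finally I would apply the maximum principle. The crucial sign input is $\gamma(\tau) > 0$, so the zeroth-order coefficient $\frac{1}{\tau} + \gamma(\tau)$ is strictly positive; at a spatial positive maximum of $H_{S}$ the gradient term drops and $\Delta H_{S} \leq 0$, forcing $\frac{\partial}{\partial \tau}\max_{M} H_{S} \leq 0$ whenever the maximum is positive. The initial datum is controlled by the assumption $0 < f < 1$: this gives $u = -\log f > 0$, hence $H_{S} = |\nabla u|^{2} - \frac{u}{\tau} \to -\infty$ as $\tau \to 0^{+}$, so $H_{S} < 0$ for $\tau$ small. Combining negativity for small $\tau$ with the differential inequality yields $H_{S} \leq 0$ for all $t \in [0, T)$, which is the claim. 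The only point requiring genuine care, rather than mechanical bookkeeping, is checking that the $\tau$-dependence of $\gamma(\tau)$ disturbs neither the cancellation $-\frac{1}{\tau}|\nabla u|^{2} + \frac{u}{\tau^{2}} = -\frac{1}{\tau}H_{S}$ nor the closing of the curvature estimate with the now $\tau$-dependent threshold $\frac{1}{2}\gamma(\tau)$; both go through because the regrouping and the curvature bound are purely algebraic within each fixed time slice, so the variable coefficient $\gamma(\tau)$ enters only as a scalar at each $\tau$.
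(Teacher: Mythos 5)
Your proposal is correct and follows exactly the route the paper intends: the paper's proof of Theorem \ref{main-Bbb} is literally the one-line instruction to repeat the argument of Theorem \ref{main-B} with Proposition \ref{lem-key-1C} in place of Proposition \ref{lem-key-1}, and your specialization $\alpha=0$, $\beta=-1$, $a=c=d=0$, $b=-1$, the regrouping into $-(\frac{1}{\tau}+\gamma(\tau))H_S$, the bound $(2A-\gamma(\tau))|\nabla u|^2\le 0$, and the maximum-principle closing with $H_S<0$ for small $\tau$ are precisely the steps of the Section \ref{sec-B} argument carried over verbatim. No discrepancy with the paper's proof.
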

The proof is similar to that of Theorem \ref{main-B}. Instead of Proposition \ref{lem-key-1}, use Proposition \ref{lem-key-1C}. 

\section{Proof of Theorem \ref{main-C}}\label{section-C}

By taking $\alpha = 0$, $\beta = -1$, $\gamma = a = c =0$, $b = -1$ and $d= 0$ in Proposition \ref{lem-key-1}, we have
$$
H_{S} = |\nabla u|^{2} - \frac{u}{\tau}
$$
and
\begin{eqnarray*}
\frac{\partial H_{S}}{\partial \tau}
&=& {\Delta}H_{S} - 2{\nabla}^{i}{H}_{S}{\nabla}_{i}u - \frac{1}{\tau}H_{S} - 2|\nabla \nabla u|^{2}  -2(R^{ij} + S^{ij}){\nabla}_{i}u{\nabla}_{j}u \\
&\leq &{\Delta}H_{S} - 2{\nabla}^{i}{H}_{S}{\nabla}_{i}u - \frac{1}{\tau}H_{S} -2(R^{ij} + S^{ij}){\nabla}_{i}u{\nabla}_{j}u.
\end{eqnarray*}
Assume that $R_{ij}(t) + S_{ij}(t) \geq 0$. Then we have
\begin{eqnarray*}
- 2\Big(R^{ij} + S^{ij} \Big){\nabla}_{i}u{\nabla}_{j}u \leq 0.
\end{eqnarray*}
Therefore, the following holds:
\begin{eqnarray*}
\frac{\partial H_{S}}{\partial \tau} &\leq& {\Delta}H_{S} - 2{\nabla}^{i}{H}_{S}{\nabla}_{i}u - \frac{1}{\tau}H_{S}.
\end{eqnarray*}
If $\tau$ is small, then $H_{S} < 0$.  Then the maximal principle implies the desired result. \par

\section{Proof of Theorem \ref{main-E}}\label{section-E}

Let $P_{S} := 2 \Delta v - |\nabla v|^{2} + {S} -2\frac{n}{\tau}$. By Corollary \ref{PS-cor} in the case where $\gamma =0$, $a=1$ and $d =-2$, we obtain
\begin{eqnarray*}
\frac{\partial P_{S}}{\partial \tau} &\leq& \Delta{P_{S}} -2{\nabla}^{i}P_{S}{\nabla}_{i}v  - \frac{2}{\tau}P_{S} -\frac{2}{n}\Big(\Delta v + S - \frac{n}{\tau} \Big)^{2} \\
&-& \frac{2}{\tau}{|\nabla v|^{2}} - \frac{2S}{\tau} +  {\mathcal E}_{1}(S_{ij}, {\nabla}v) \\
&=& \Delta{P_{S}} -2{\nabla}^{i}P_{S}{\nabla}_{i}v -\frac{2}{\tau} \Big(P_{S} + |\nabla v|^{2} + S \Big) \\
&-& \frac{2}{n}\Big(\Delta v + S - \frac{n}{\tau} \Big)^{2}+  {\mathcal E}_{1}(S_{ij}, {\nabla}v).
\end{eqnarray*}
where notice that ${\mathcal E}_{(1, -1, 2, 1)}(S, {\nabla}v) = {\mathcal E}_{1}(S, {\nabla}v)$. Since we also have
$$
P_{S} + |\nabla v|^{2} + S = 2 \Big(\Delta v + S - \frac{n}{\tau} \Big),
$$
we obtain
\begin{eqnarray*}
\frac{\partial P_{S}}{\partial \tau} &\leq& \Delta{P_{S}} -2{\nabla}^{i}P_{S}{\nabla}_{i}v -\frac{2}{\tau} \Big(P_{S} + |\nabla v|^{2} + S \Big) \\
&-& \frac{1}{2n}\Big(P_{S} + |\nabla v|^{2} + S \Big)^{2}+  {\mathcal E}_{1}(S_{ij}, {\nabla}v) \\
&=& \Delta{P_{S}} -2{\nabla}^{i}P_{S}{\nabla}_{i}v -\frac{1}{2n} \Big(P_{S} + |\nabla v|^{2} + S + \frac{2n}{\tau} \Big)^{2} + \frac{2n}{\tau^{2}} \\
&+& {\mathcal E}_{1}(S_{ij}, {\nabla}v) \\
&\leq& \Delta{P_{S}} -2{\nabla}^{i}P_{S}{\nabla}_{i}v + \frac{2n}{\tau^{2}},
\end{eqnarray*}
where we used ${\mathcal E}_{1}(S_{ij}, {\nabla}u) \leq 0$. By adding $\frac{2n}{\tau}$ to $P_{S}$, we get
\begin{eqnarray*}
\frac{\partial}{\partial \tau}\Big( P_{S} + \frac{2n}{\tau} \Big) &\leq& \Delta \Big( P_{S} + \frac{2n}{\tau} \Big) -2{\nabla}^{i}\Big( P_{S} + \frac{2n}{\tau} \Big){\nabla}_{i}v.
\end{eqnarray*}
This implies $\displaystyle \max_{M} \Big( P_{S} + \frac{2n}{\tau} \Big) = \displaystyle \max_{M} \Big( 2 \Delta v - |\nabla v|^{2} + {S} \Big)$ decreases as $\tau$ increases, which means that this quantity increases as $t$ increases.

\vfill

{\footnotesize
\noindent
{Hongxin Guo }\\
{School of mathematics and information science, Wenzhou University, \\
Wenzhou, Zhe-jiang 325035, China}\\
{\sc e-mail}: guo@wzu.edu.cn \\

{\footnotesize
\noindent
{Masashi Ishida}\\
{Department of Mathematics, Graduate School of Science,
Osaka University \\
1-1, Machikaneyama, Toyonaka, Osaka, 560-0043, Japan}\\
{\sc e-mail}: ishida@math.sci.osaka-u.ac.jp \\

\end{document}